\newtheorem{theorem}{Theorem}[section]
\newtheorem{proposition}[theorem]{Proposition}
\newtheorem{corollary}[theorem]{Corollary}
\newtheorem{lemma}[theorem]{Lemma}
\newtheorem{example}[theorem]{Example}
\newtheorem{remark}[theorem]{Remark}
\def\qed{\hfill $\Box$\medskip}
\def\diag{{\rm diag}\,}
\def\Ker{\mathop{\rm Ker}}
\def\Ran{\mathop{\rm Ran}}
\def\bM{{\mathbb M}}
\def\bH{{\mathbb H}}
\def\cC{{\mathcal C}}
\def\Span{\mathop{\rm Span}}
\def\span{\mathop{\Span}\nolimits}
\def\cS{{\mathcal S}}
\def\IC{{\mathbb{C}}}
\def\IR{{\mathbb{R}}}
\def\ZZ{{\mathbb{Z}}}
\def\IT{{\mathbb{T}}}
\def\TT{{\mathbb{T}}}
\def\bV{{\bf V}}
\def\tr{\mathop{\rm tr}}
\def\bV{{\bf V}}
\def\IF{{\mathbb F}}
\def\conv{{\rm conv}}
\def\Tr{\mathop{\mathrm{Tr}}}
\begin{document}
\openup .95\jot

\title{Linear preservers of parallel matrix pairs with respect to the $k$-numerical radius}
\author[Kuzma]{Bojan Kuzma}
\author[Li]{Chi-Kwong Li}
\author[Poon]{Edward Poon}
\author[Singla]{Sushil Singla}
\maketitle

\begin{abstract}
 Let $1 \leq k < n$ be integers.  Two $n \times n$ matrices $A$ and $B$ form a parallel pair with respect to the $k$-numerical radius $w_k$ if $w_k(A + \mu B) = w_k(A) + w_k(B)$ for some scalar $\mu$ with $|\mu| = 1$; they form a TEA (triangle equality attaining) pair if the preceding equation holds for $\mu = 1$.  We classify linear bijections on $\bM_n$ and on $\bH_n$ which preserve parallel pairs or TEA pairs.  Such preservers are scalar multiples of $w_k$-isometries, except for some exceptional maps on $\bH_n$ when $n=2k$.
\end{abstract}

Keywords. Parallel pairs, $k$-numerical radius, preservers.

AMS Classification. Primary: 15A60; 15A86. Secondary: 47A12.

\section{Introduction}

Let $\bV$ be a normed space equipped with a norm $\| \cdot \|$ over $\IF = \IC$ or $\IR$.
Suppose $x, y \in \bV$. Then $x$ is parallel to $y$, denoted by $x\| y$,
 if $\|x + \mu y\| = \|x\| + \|y\|$ for  some $\mu\in \IF$ with $|\mu| = 1$.
 If the equality holds with $\mu=1$, we say that $x$ and $y$ form a triangle equality attaining (TEA) pair.
We are interested in linear maps $T\colon  \bV \rightarrow \bV$ preserving parallel pairs
i.e., $T(x)\| T(y)$ whenever $x\|y$. We are also interested in linear maps preserving TEA pairs, i.e.,
$(T(x), T(y))$ is a TEA pair whenever $(x,y)$ is a TEA pair.

Note that the zero map, and  a map of the form $x \mapsto f(x)y$ for a fixed $y\in \bV$
and a fixed linear functional $f$ on $V$ always preserves parallel pairs. Thus, the semigroup of
linear preservers of parallel pairs contains degenerate maps. Furthermore, it also contains all  scalar multiples of  isometries.
We are interested  if  it contains additional invertible linear maps or not. We remark that the answer
relies on the properties  of the given norm. For example, if the norm is strictly convex, i.e., $\|x+y\| < \|x\| + \|y\|$
whenever $x, y$ are linearly independent,
then $x$ and $y$ are parallel if and only if they are linearly dependent, in which case every 
linear map will preserve parallel pairs.
Nevertheless, there are results showing that bijective linear maps
preserving parallel pairs or TEA pairs must be nonzero multiples of isometries; see \cite{KLPS,LTWW0,LTWW}.
In this paper, we
consider parallel pair preservers and TEA preservers on $\bM_n$, the space of complex $n \times n$ matrices, when
the norm is the $k$-numerical radius for $1\leq k<n$. 
Recall that the $k$-numerical range and the $k$-numerical radius of $A \in \bM_n$ are defined as
$$W_k(A) = \{ \tr AP: P^* = P = P^2, \tr P = k\},$$
and
$$w_k(A) = \max\{ |\mu|: \mu \in W_k(A)\}.$$
It is known that $w_k$ is a norm on $\bM_n$ and is invariant under unitary similarities, i.e.,
$w_k(U^*AU) = W_k(A)$ for any $A \in \bM_n$ and unitary $U\in \bM_n$.
It follows from~\cite{Li87} (see also \cite{LiTsing88a}  for more general results)  that a linear map
$T\colon \bM_n\rightarrow \bM_n$ is an isometry for $w_k$ if and only if there is $\mu \in \IC$ with
$|\mu| = 1$ and a unitary $U$ such that one of the following holds.

\medskip
(1) $\mu T$ has the form $A \mapsto U^*AU$ or  $A \mapsto U^*A^tU,$

\medskip
(2)
$n=2k>2$ and $\mu T$ has the form
$A \mapsto \dfrac{1}{k}(\tr A)I-U^*AU$ or $A \mapsto \dfrac{1}{k}(\tr A)I-U^*A^tU.$

\medskip
We will confirm that invertible linear maps on $\bM_n$ preserving parallel pairs or TEA pairs with respect
to the $k$-numerical radius norm are scalar multiplies of isometries.
In particular, we have the following.

\begin{theorem} \label{main}
Let $n\ge 2$ and $1\leq k<n$, and $T\colon \bM_n \rightarrow \bM_n$ be a bijective linear map.
The following conditions are equivalent.
\begin{itemize}
\item[{\rm (a)}] $T$ preserves
parallel pairs with respect to the $k$-numerical radius.
\item[{\rm (b)}] $T$ preserves TEA pairs with respect to the $k$-numerical radius.
\item[{\rm (c)}]
There
is a nonzero $\mu \in \IC$ and a unitary $U \in \bM_n$ such that

\medskip
{\rm (c.1)} $\mu T$ has the form  $A \mapsto U^*AU$ or  $A \mapsto U^*A^tU,$ or

\medskip
{\rm (c.2)} $n=2k>2$ and $\mu T$ has the form
$A \mapsto \dfrac{1}{k}(\tr A)I-U^*AU$ or $A \mapsto \dfrac{1}{k}(\tr A)I-U^*A^tU.$
\end{itemize}
\end{theorem}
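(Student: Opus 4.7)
The implications $(c) \Rightarrow (a)$ and $(c) \Rightarrow (b)$ are immediate: if $\mu T$ is a $w_k$-isometry, then $w_k(T(X)) = |\mu|^{-1} w_k(X)$ for every $X$, so for a unimodular $\nu_0$ witnessing the parallelism (respectively, TEA) of $(A,B)$, the equality $w_k(T(A) + \nu_0 T(B)) = |\mu|^{-1} w_k(A + \nu_0 B) = w_k(T(A)) + w_k(T(B))$ holds with the same $\nu_0$. The implication $(b) \Rightarrow (a)$ is also elementary: if $(A,B)$ is parallel with witness $\mu_0$, then $(A, \mu_0 B)$ is TEA, so $(T(A), \mu_0 T(B))$ is TEA, whence $(T(A), T(B))$ is parallel. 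Thus the entire content of the theorem is the implication $(a) \Rightarrow (c)$.

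The plan for $(a) \Rightarrow (c)$ is to translate the purely combinatorial parallelism-preserving hypothesis into metric information about $T$, and then invoke the classification of $w_k$-isometries recalled in~(1)--(2). The first ingredient is the characterization of parallelism via common supporting projections: $(A, B)$ is parallel with respect to $w_k$ if and only if there exist a rank-$k$ orthogonal projection $P$ and unimodular $\alpha, \beta \in \IC$ with $\tr(AP) = \alpha\, w_k(A)$ and $\tr(BP) = \beta\, w_k(B)$; the TEA condition corresponds to $\alpha = \beta$. This recasts the hypothesis on $T$ as a statement about how $T$ interacts with the dual family of rank-$k$ projections and their associated extremal states.

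The second step is to pin down $T(I)$. For any $A$, choose an optimal rank-$k$ projection $P$ with $\tr(AP) = e^{i\theta} w_k(A)$; then $\tr((A+\nu I)P) = e^{i\theta} w_k(A) + \nu k$ achieves modulus $w_k(A) + k = w_k(A) + w_k(I)$ when $\nu = e^{i\theta}$, so $(A, I)$ is parallel. Hence $I$ is parallel to every matrix, and by bijectivity $T(I)$ must share this property. A structural lemma, to be proved separately, will show that for $n \neq 2k$ the only matrices parallel to every matrix are scalar multiples of $I$, so $T(I) = cI$ for some $c \ne 0$; the case $n = 2k$ is analyzed separately and produces the additional option (c.2). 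Once $T(I)$ has been normalized, the argument proceeds by examining the action of $T$ on further structured families — rank-one matrices, scalar multiples of rank-$k$ projections, and Hermitian matrices — to recover the full extremal-state structure of $w_k$. The resulting geometric constraints force $w_k(T(A)) = |c|\,w_k(A)$ for all $A$, at which point Li's classification~\cite{Li87} yields exactly the listed forms.

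The principal obstacles are twofold. The first is the $n = 2k$ exception: the map $A \mapsto k^{-1}(\tr A) I - A$ is a genuine $w_k$-isometry, since $I - P$ is again a rank-$k$ projection when $P$ is and $\tr((k^{-1}(\tr A)I - A)P) = \tr(A(I-P))$; this extra symmetry produces further matrices parallel to all of $\bM_n$ and forces a branching case analysis. The second, more subtle, obstacle is that the hypothesis only provides equality $w_k(T(A) + \nu T(B)) = w_k(T(A)) + w_k(T(B))$ for some \emph{unknown} unimodular $\nu$, so one cannot directly apply a triangle-equality identification; controlling these phases — typically by combining several parallelism relations simultaneously, such as $(A,B)$, $(A, iB)$, and $(A, cB)$ for various scalars $c$ — constitutes the technical heart of the argument.
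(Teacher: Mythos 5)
Your treatment of the easy implications $(c)\Rightarrow(b)\Rightarrow(a)$ and your overall strategy (normalize $T(I)$, extract metric information, then invoke the classification of $w_k$-isometries from \cite{Li87}) match the paper's route. However, the proposal has a genuine gap: the entire technical core is only gestured at. The sentence ``the argument proceeds by examining the action of $T$ on further structured families \dots\ the resulting geometric constraints force $w_k(T(A)) = |c|\,w_k(A)$'' is precisely the part that requires proof, and nothing in the proposal supplies it. In the paper this step occupies Lemmas \ref{lem4-new}--\ref{mainlemma}: one shows that for each rank-$k$ projection $P$ the set $S(P)=\{B:\tr(BP)=w_k(B)\}$ is a convex cone of real affine dimension $\gamma(n,k)=k^2+(n-k)^2+n^2-1$, that distinct projections (and distinct phases mod $\pi$) yield cones with distinct real linear spans, and hence that a bijective parallelism preserver cannot merge two such cones into one; this is then leveraged, via a perturbation $A+\gamma I$ over the open right half-plane and a M\"obius-transformation rigidity argument, to prove that $T^{-1}$ preserves the set $\{A: W_k(A)=[-1,1]\}$ and therefore the $k$-numerical range of every Hermitian matrix, at which point \cite[Theorem 2]{Li87} applies. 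Your closing paragraph correctly identifies the phase-control problem as the technical heart, but identifying an obstacle is not the same as overcoming it; as written, the proposal does not constitute a proof of $(a)\Rightarrow(c)$.

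A secondary, factual error: you assert that for $n=2k$ there are ``further matrices parallel to all of $\bM_n$'' beyond $\IC I$, and that the branch (c.2) originates there. This is false in the complex case: Lemma \ref{lem2-new} shows that for every $1\le k<n$, including $n=2k$, a matrix parallel to every $B\in\bM_n$ must be a scalar multiple of $I$ (if $|\tr(AP)|=w_k(A)$ for \emph{all} rank-$k$ projections $P$, then $W_k(A)$ lies on a circle, and convexity of $W_k(A)$ forces it to be a singleton, i.e.\ $A\in\IC I$). The exceptional maps in (c.2) are unital $w_k$-isometries already present in Li's classification; they fix $\IC I$ and require no separate analysis at the $T(I)$ stage. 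So the case split you propose at that point is both unnecessary and based on an incorrect premise.
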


Let $\bH_n$ be the real linear space of Hermitian matrices in $\bM_n$.
We will show that a similar result holds for the real space of
Hermitian matrices except when $n = 2k$.

\begin{theorem}\label{main:Hermitian}
	Let $n \geq 2$ and $1 \leq k < n$. Suppose $T\colon\bH_n \to \bH_n$ is
	 a bijective real linear map. The following conditions are equivalent.
	 \begin{itemize}
	 \item[{\rm (a)}] $T$
	   preserves parallel pairs with respect to the $k$-numerical radius.
	 \item[{\rm (b)}] $T$
	   preserves TEA pairs with respect to the $k$-numerical radius.
	 \item[{\rm (c)}] There exist a nonzero $\mu \in \IR$ and  a unitary $U\in \bM_n$ such that one of the following holds.
	
\medskip
{\rm (c.1)} $\mu T$ has the form  $A \mapsto U^*AU$ or  $A \mapsto U^*A^tU.$

\medskip
{\rm (c.2)} $n=2k>2$ and $\mu T$ is a unital map such that

\medskip
\centerline{\ \quad
$T(A) =  c U^*AU$ for all $A \in \bH_n^0$, \ or
\
$T(A) = cU^*A^tU$ for all $A\in \bH_n^0$,}

\medskip
\quad where $\bH_n^0$ denotes the set of trace zero matrices in $\bH_n$,
and $c\in\IR\setminus\{0\}$.

\medskip
{\rm (c.3)}  $n = 2$ and $\mu T$ is a bijective unital trace preserving map.
\end{itemize}
\end{theorem}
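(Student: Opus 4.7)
The plan is to adapt the strategy used in the complex case (Theorem~\ref{main}) with extra work to accommodate the exceptional structures when $n=2k$ and when $n=2$. The easy direction (c) $\Rightarrow$ (a), (b) is handled case by case. In (c.1), $\mu T$ is a $w_k$-isometry, which trivially preserves parallel and TEA pairs. For (c.2), the key observation is that when $n=2k$, every $A = A_0 + aI$ with $A_0 \in \bH_n^0$ satisfies $w_k(A) = w_k(A_0) + k|a|$, since on trace-zero Hermitian matrices the top-$k$ and bottom-$k$ eigenvalue sums agree in absolute value when $n=2k$. From this additive splitting one verifies that $(A,B)$ is TEA iff $(A_0,B_0)$ is TEA in $\bH_n^0$ and $a, b$ share the same sign, and both conditions are preserved by the maps in (c.2). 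For (c.3), identifying $\bH_2^0 \cong \IR^3$ via the Pauli matrices sends $w_1$ to the Euclidean norm, so TEA pairs in $\bH_2^0$ are exactly the nonnegatively proportional pairs, which any bijective linear map preserves.

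For the main direction, I first observe that for nonzero Hermitian $A, B$ any unimodular $\mu$ witnessing parallelism must be real, hence $\pm 1$: chasing phases in $|\tr((A+\mu B)P)| \leq |\tr(AP)| + |\tr(BP)| \leq w_k(A)+w_k(B)$ forces $\tr(AP)$ and $\mu \tr(BP)$ to be real numbers with a common sign. Hence $(A,B)$ is parallel iff $(A,B)$ or $(A,-B)$ is TEA, and since $T$ is real linear this gives (a) $\Leftrightarrow$ (b), reducing us to (b) $\Rightarrow$ (c). My strategy is to extract structure from the projection characterization: $(A,B)$ is TEA iff there exists a rank-$k$ orthogonal projection $P$ with $\tr(AP) = \epsilon w_k(A)$ and $\tr(BP) = \epsilon w_k(B)$ for a common $\epsilon \in \{\pm 1\}$. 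The TEA partners of $I$ form the closed convex cone of Hermitians whose $w_k$ equals the top-$k$ eigenvalue sum; $T$ must permute this cone with its negative, which together with bijectivity pins down $T(I)$ up to the slack permitted by the exceptional cases. I would then identify rank-one matrices by the smallness or extremality of their TEA classes, show that (up to a real scalar) $T$ preserves rank-one projections, and finally invoke the classification of linear isometries of $(\bH_n, w_k)$ to force one of (c.1), (c.2), (c.3).

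The main obstacle will be the careful bookkeeping in the exceptional cases. When $n = 2k$, the map $A_0 \mapsto -A_0$ on trace-zero matrices preserves $w_k$ (the spectrum there is symmetric about $0$); combined with the additive splitting, this allows an arbitrary real rescaling of the trace-zero part while fixing the trace part, yielding precisely the extra freedom in (c.2). When $n = 2$, the Euclidean geometry on $\bH_2^0$ collapses TEA to positive proportionality, erasing any extremality that would single out rank-one projections, and producing the fully degenerate (c.3). Conversely, for $n = 2k > 2$, showing that no further freedom beyond (c.2) arises requires using higher-rank projections in the TEA characterization to rule out shearing between the trace and trace-zero parts, and using the rigidity of $w_k$-isometries of $\bH_n^0$ for $n>2$.
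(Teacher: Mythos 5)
Your verification of (c) $\Rightarrow$ (b) is fine, and the additive splitting $w_k(A_0+aI)=w_k(A_0)+k|a|$ for $n=2k$ is a clean way to handle (c.2) and (c.3). But the hard direction has two genuine problems. First, your reduction of (a) to (b) does not work. For Hermitian $A,B$, parallelism does mean that $(A,B)$ or $(A,-B)$ is TEA, and this gives (b) $\Rightarrow$ (a); but in the direction you need, a parallel-pair preserver applied to a TEA pair $(A,B)$ only yields that $(TA,TB)$ or $(TA,-TB)$ is TEA, not that $(TA,TB)$ is. Since (b) is the stronger hypothesis, proving only (b) $\Rightarrow$ (c) leaves the implication (a) $\Rightarrow$ (c) --- the main content of the theorem --- unproved. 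The paper's logical structure is (a) $\Rightarrow$ (c) $\Rightarrow$ (b) $\Rightarrow$ (a), and its entire Section 3 works under the weaker hypothesis (a).

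Second, the core of your argument is a plan rather than a proof. The steps ``identify rank-one matrices by the smallness or extremality of their TEA classes'' and ``show that $T$ preserves rank-one projections'' are exactly where the difficulty lies, and no mechanism is offered: the sets $S(P)$ attached to a matrix depend only on its norm-attaining projections, not directly on its rank, and for $n=2k$ distinct projections $P$ and $I-P$ give the same linear span $\span_{\IR}S(P)$, which is the source of the exceptional case and must be confronted head-on (the paper does this in Lemma \ref{section3:lem-2proj} and Lemma \ref{section3:lem6a--new}). Moreover, your final step ``invoke the classification of linear isometries of $(\bH_n,w_k)$'' cannot close the argument, because the maps in (c.2) with $c\neq\pm1$ and in (c.3) are \emph{not} scalar multiples of isometries; whatever intermediate structure you establish must be weaker than isometry. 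The paper's route is entirely different: it proves, via a recursive block-splitting argument and the combinatorial Lemma \ref{lem:combinatorial}, that $T^{-1}$ preserves commutativity, then applies the Choi--Jafarian--Radjavi classification \cite{CJR} to get $T^{-1}(A)=cU^*AU+f(A)I$ (or the transposed form), and finally pins down the functional $f$ using uniqueness of norm-attaining projections and Lemma \ref{lem:S(P)-is-pointed-cone}. You would need to supply an argument of comparable substance for your rank-one step, and separately treat $n=2$, before the proposal could be considered a proof.
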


The proofs of Theorem 1.2 and Theorem 1.3 will be given in Section 2 and Section 3, respectively.
We will always assume that $1 \leq k < n$ in our discussion.  Let us collect some basic properties of the $k$-numerical range
and $k$-numerical radius for easy reference in the next proposition. We will denote by
$\lambda_1(G) \ge \cdots \ge \lambda_n(G)$ the eigenvalues of  $G \in \bH_n$.

\begin{proposition}\label{prop:propertiesofW_k}
 The following properties of $W_k(A)$ and $w_k(A)$ hold.
 \begin{enumerate}
\item[{\rm (1)}] $W_k(\mu A + \nu I ) = \mu W_k(A) + k\nu$ for any $\mu, \nu \in \IC$.

\item [{\rm (2)}]
$W_k(A) = \bigcap_{\theta\in [0, 2\pi)} \{ \mu: e^{i\theta} \mu +
 e^{-i\theta} \bar \mu \le \sum_{j=1}^k \lambda_j(e^{i\theta}A +
e^{-i\theta} A^*)\}.$

If $A\in \bH_n$,  then $W_k(H)$ is the real interval $[\sum_{j=1}^k \lambda_{n+1-j} (A), \sum_{j=1}^k \lambda_j (A)]$.

\item[{\rm (3)}] $w_k(A) = \max_{\theta\in [0, 2\pi)} \sum_{j=1}^k \lambda_j(e^{i\theta}A +
e^{-i\theta} A^*)/2$.

\item [{\rm (4)}] Suppose $A = A_1 \oplus A_2 \in \bM_\ell\oplus \bM_{n-\ell}$. Then
$$W_k(A) = \conv\{\mu_1 + \mu_2: \mu_1 \in W_r(A_1), \mu_2\in W_{k-r}(A_2), 0 \le r \le \ell, 0 \le k-r \le n-\ell\}.$$
Consequently,
$$w_k(A) \leq \max\{ w_r(A_1) + w_{k-r}(A_2):
 0 \le r \le \ell, 0 \le k-r \le n-\ell\},$$
where $W_0(X) = \{0\}$ and $w_0(X) = 0$ for any square matrix $X$.
\item[{\rm (5)}] The set $W_k(A)$ is a nondegenerate line segment with endpoints $a_1, a_2$ if and only if
$A = cI + dH$ for some $c, d \in \IC$ with $|d| = 1$ and $H = H^*$ with eigenvalues $\mu_1 \ge \cdots  \ge \mu_n$
such that $a_1 = ck + d(\mu_1 + \cdots + \mu_k)$ and
$a_2 = ck + d(\mu_n + \cdots + \mu_{n-k+1})$. Also, $W_k(A) = \{\xi\}$ if and only if $A = \xi I/k$.
\end{enumerate}
\end{proposition}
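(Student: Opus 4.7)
The plan is to verify the five items in order; parts (1)--(4) follow routinely from Berger's theorem (convexity of $W_k$) together with Ky Fan's maximum principle, and (5) reduces through a clean algebraic step to the same ingredients.

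Part (1) is immediate from $\tr((\mu A+\nu I)P)=\mu\tr(AP)+k\nu$ for any projection $P$ with $\tr P=k$. For (2), I would invoke convexity of $W_k(A)$ to write it as the intersection of its closed supporting half-planes, and compute the support function in direction $e^{i\theta}$ via
$$\max_{P}\Re\bigl(e^{i\theta}\tr(AP)\bigr)=\max_{P}\tr\!\Bigl(\tfrac{e^{i\theta}A+e^{-i\theta}A^*}{2}P\Bigr)=\tfrac{1}{2}\sum_{j=1}^k\lambda_j\bigl(e^{i\theta}A+e^{-i\theta}A^*\bigr),$$
where the last equality is Ky Fan's maximum principle for the top-$k$ eigenvalue sum over rank-$k$ projections $P$. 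For Hermitian $A$, evaluating at $\theta=0$ and $\theta=\pi$ yields the claimed real interval. Part (3) is simply the reformulation of $w_k(A)=\max_{\mu\in W_k(A)}|\mu|$ as a maximum of the support functions just computed.

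For (4), the Hermitian matrix $(e^{i\theta}A+e^{-i\theta}A^*)/2$ is block-diagonal with blocks $(e^{i\theta}A_j+e^{-i\theta}A_j^*)/2$, so its top-$k$ eigenvalue sum equals the maximum over admissible $r$ of $\tfrac{1}{2}\sum_{j=1}^{r}\lambda_j(e^{i\theta}A_1+e^{-i\theta}A_1^*)+\tfrac{1}{2}\sum_{j=1}^{k-r}\lambda_j(e^{i\theta}A_2+e^{-i\theta}A_2^*)$. This is exactly the support function in direction $e^{i\theta}$ of the convex hull on the right of the stated formula, so the two convex sets agree; the $w_k$-inequality follows by maximizing over $\theta$.

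Item (5) is the most delicate. The ``if'' direction follows from (1) and the Hermitian case of (2): if $A=cI+dH$ with $|d|=1$ and $H\in\bH_n$, then $W_k(A)=ck+dW_k(H)=ck+d\,[\sum_{j=n-k+1}^n\lambda_j(H),\sum_{j=1}^k\lambda_j(H)]$, giving the stated endpoints. I would handle the single-point statement first: $W_k(A)=\{\xi\}$ implies $W_k(A-\xi I/k)=\{0\}$ by (1), hence $w_k(A-\xi I/k)=0$, and since $w_k$ is a norm, $A=\xi I/k$. For the converse of the nondegenerate-segment case, if $W_k(A)$ is a segment with midpoint $m$ and unit direction $d\in\IC$, then by (1) the matrix $H=(A-mI/k)/d$ satisfies $W_k(H)\subset\IR$. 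Writing $H=B+iC$ with $B,C\in\bH_n$, realness of $\tr(HP)$ for every rank-$k$ projection $P$ forces $\tr(CP)=0$, i.e.\ $W_k(C)=\{0\}$; the single-point case just established then gives $C=0$, so $H$ is Hermitian and (2) yields the endpoint formula. The main obstacle, albeit a minor one, is precisely this reduction of ``reality'' of $W_k(H)$ to Hermiticity of $H$, executed cleanly via the norm property of $w_k$.
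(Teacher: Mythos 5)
Your proof is correct. The paper itself does not prove this proposition; it simply cites \cite{Li94} (and \cite{LPW}) for each item, so there is no in-paper argument to compare against line by line. Your self-contained route via support functions is the standard one underlying those references, and you have isolated the right external inputs: the convexity (and compactness) of $W_k(A)$, which you attribute to Berger and which the paper takes from \cite{Li94,Westwick}, and Ky Fan's maximum principle $\max_P \tr(HP)=\sum_{j=1}^k\lambda_j(H)$. With those in hand, (1)--(3) are exactly as you say; for (4) your observation that both sides are compact convex sets with the same support function (using that the top-$k$ eigenvalue sum of a block-diagonal Hermitian matrix is the maximum over admissible splittings $r+(k-r)$) is a clean and complete argument. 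In (5), your reduction is the key step and it is handled correctly: the singleton case follows from (1) together with positive-definiteness of the norm $w_k$ (stated as known in the paper, and provable independently, so there is no circularity), and then realness of $W_k(H)$ for $H=B+iC$ forces $W_k(C)=\{0\}$, hence $C=0$, after which the Hermitian case of (2) gives the endpoints. Two harmless points you may wish to make explicit: the endpoint attainment in the Hermitian interval of (2) comes from evaluating $\tr(AP)$ at the two extremal eigenprojections (the half-plane bounds alone only give containment), and in the ``if'' direction of (5) nondegeneracy requires $H$ to be non-scalar, i.e.\ $\mu_1+\cdots+\mu_k\ne\mu_{n-k+1}+\cdots+\mu_n$.
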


Note that (1) appears in \cite[(2.1)]{Li94}; (2) follows from \cite[(5.1)]{Li94} and the convexity of $W_k(A)$ \cite[(3.1)]{Li94}.
For (4) we refer to  \cite[(4.3)]{Li94} (see also~\cite[Theorem 4.4]{LPW} for a  generalization) while (5) follows from property (1) and \cite[(5.1a)]{Li94}.

Our study concerns parallel pairs of matrices with respect to the  $k$-numerical radius.
We refer to a  recent paper~\cite{Zamani} for the study of parallel pairs with respect to the numerical radius in $C^*$ algebras
and \cite{GS} for the study of parallel pairs with respect to the joint numerical radius for $m$-tuples of matrices.

\section{Proof of Theorem \ref{main}}\label{section2}

We say that the $k$-numerical radius of $A$ is attained at a rank-$k$ projection $P$ if $|\tr AP| = w_k(A)$.
We begin with a few results concerning the properties of parallel pairs,
which are useful for our proofs and may be of independent interest.
We shall use the following lemma repeatedly without comment.

\begin{lemma} \label{lem1-new}
Suppose $A, B \in \bM_n$.
Then $A\|B$ if and only if any one of the following
equivalent conditions holds.
\begin{itemize}
\item[{\rm (a)}] There is a rank-$k$ orthogonal projection
$P$ such that $w_k(A) =|\tr(AP)|$ and $w_k(B) = |\tr(BP)|$.
\item[{\rm (b)}]
There is a unitary $U$ such that the leading $k\times k$ blocks $A_1, B_2$ of $U^*AU$ and $U^*BU$
satisfy  $w_k(A) = |\tr A_1|$ and $w_k(B) = |\tr B_1|$.
\end{itemize}
\end{lemma}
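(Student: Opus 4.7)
The plan is to prove (a) $\Leftrightarrow$ $A\|B$ directly, and then note that (a) and (b) are just reformulations of each other. For the latter, every rank-$k$ orthogonal projection can be written as $P = U\,\mathrm{diag}(I_k,0_{n-k})\,U^*$ for some unitary $U$, and since $\tr(AP) = \tr\bigl(U^*AU\cdot\mathrm{diag}(I_k,0)\bigr)$ equals the trace of the leading $k\times k$ block of $U^*AU$ (and similarly for $B$), conditions (a) and (b) are literally the same statement up to this change of basis.

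The backward direction of the main equivalence is straightforward. Given a rank-$k$ projection $P$ satisfying $|\tr(AP)| = w_k(A)$ and $|\tr(BP)| = w_k(B)$, I would choose unimodular scalars $e^{i\alpha}, e^{i\beta}$ so that $e^{i\alpha}\tr(AP) = w_k(A)$ and $e^{i\beta}\tr(BP) = w_k(B)$, and set $\mu = e^{i(\alpha-\beta)}$. Then $e^{i\alpha}\tr((A+\mu B)P) = w_k(A) + w_k(B)$, which forces $w_k(A+\mu B) \ge w_k(A) + w_k(B)$. The reverse inequality is the triangle inequality for the norm $w_k$, so $A\|B$.

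For the forward direction, assume $A\|B$ with $w_k(A+\mu B) = w_k(A)+w_k(B)$ for some unimodular $\mu$. Compactness of the Grassmannian of rank-$k$ projections gives a $P$ with $|\tr((A+\mu B)P)| = w_k(A+\mu B)$. Choose a phase $e^{i\alpha}$ so that $e^{i\alpha}\tr((A+\mu B)P) = w_k(A)+w_k(B)$, and split this as
\[
w_k(A)+w_k(B) \;=\; \Re\bigl(e^{i\alpha}\tr(AP)\bigr) + \Re\bigl(e^{i\alpha}\mu\tr(BP)\bigr).
\]
The key step, which I expect to be the main (though minor) obstacle, is the squeeze argument: each real-part term is bounded above by $|\tr(AP)|\le w_k(A)$ and $|\tr(BP)|\le w_k(B)$ respectively, so the equality of the sum with $w_k(A)+w_k(B)$ forces equality in each bound individually. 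In particular $|\tr(AP)| = w_k(A)$ and $|\tr(BP)| = w_k(B)$, yielding (a). This completes the plan; the proof is short and uses only compactness, basic phase-adjustment, and the triangle inequality for $w_k$.
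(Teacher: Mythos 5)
Your proof is correct and is precisely the routine argument the paper compresses into ``follows readily from the definitions'': the backward direction by phase-alignment plus the triangle inequality, the forward direction by the squeeze on $\Re\bigl(e^{i\alpha}\tr(AP)\bigr)+\Re\bigl(e^{i\alpha}\mu\tr(BP)\bigr)$, and (a)$\Leftrightarrow$(b) by writing $P=U\diag(I_k,0)U^*$. The only blemish is a harmless sign slip: with your choices of $\alpha,\beta$ you need $\mu=e^{i(\beta-\alpha)}$ rather than $e^{i(\alpha-\beta)}$.
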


\begin{proof}
The statements follow readily from the definitions.
\end{proof}

For $A \in \bM_n$, let
$${\mathcal P}(A) = \{B \in \bM_n:\;  B \hbox{ is parallel to } A\},$$ and for a rank-$k$ orthogonal projection $P\in \bM_n$, let us define
$$S(P) = \{B \in \bM_n: \tr(BP) = w_k(B)\}.$$
By Lemma \ref{lem1-new}, we have the following.

\begin{lemma} \label{lem5-new}
Let $1 \le k < n$, $A \in \bM_n$.
Then
$${\mathcal P}(A) = \bigcup\{e^{i\theta}S(P): \theta \in [0, 2\pi), P^2 = P = P^*, \tr P = k,
|\tr AP| = w_k(A)\}.$$
\end{lemma}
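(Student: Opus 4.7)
The plan is to prove both inclusions as essentially direct consequences of Lemma~\ref{lem1-new}(a), which says that $A\|B$ iff there exists a rank-$k$ orthogonal projection $P$ with $|\tr(AP)| = w_k(A)$ and $|\tr(BP)| = w_k(B)$. The only real content beyond Lemma~\ref{lem1-new} is identifying the phase $\theta$ that converts the absolute-value condition $|\tr(BP)| = w_k(B)$ into the unimodular condition $\tr(BP) = w_k(B)$ that defines $S(P)$.

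For the inclusion $\mathcal{P}(A) \subseteq \bigcup e^{i\theta} S(P)$, I would start with $B \in \mathcal{P}(A)$ and apply Lemma~\ref{lem1-new}(a) to obtain a rank-$k$ orthogonal projection $P$ with $|\tr(AP)| = w_k(A)$ and $|\tr(BP)| = w_k(B)$. Writing $\tr(BP) = e^{i\theta} w_k(B)$ for a suitable $\theta \in [0,2\pi)$, I would then verify that the matrix $C := e^{-i\theta} B$ satisfies $\tr(CP) = w_k(B) = w_k(C)$ (using absolute homogeneity of the norm $w_k$), so $C \in S(P)$, and therefore $B = e^{i\theta} C \in e^{i\theta} S(P)$ with $P$ one of the projections indexed on the right-hand side.

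For the reverse inclusion, I would take any $B \in e^{i\theta} S(P)$ where $P$ is a rank-$k$ orthogonal projection satisfying $|\tr(AP)| = w_k(A)$. Writing $B = e^{i\theta} C$ with $C \in S(P)$, one has $\tr(CP) = w_k(C)$, hence
\[
|\tr(BP)| = |e^{i\theta}\tr(CP)| = w_k(C) = w_k(B).
\]
Combined with $|\tr(AP)| = w_k(A)$, Lemma~\ref{lem1-new}(a) yields $A \| B$, so $B \in \mathcal{P}(A)$.

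Since both inclusions are one-line unpackings of the definitions together with Lemma~\ref{lem1-new}, there is no substantial obstacle; the only point one must be careful about is to choose the phase $\theta$ to match the argument of $\tr(BP)$, and to note that $w_k(e^{-i\theta}B)=w_k(B)$ so that the condition $\tr(CP)=w_k(C)$ in the definition of $S(P)$ is genuinely the same as $\tr(CP) = w_k(B)$ after rescaling.
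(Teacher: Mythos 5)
Your proof is correct and follows exactly the route the paper intends: the paper gives no written proof of this lemma, stating only that it follows from Lemma~\ref{lem1-new}, and your argument is precisely the straightforward unpacking of that lemma (with the phase $\theta$ chosen to match the argument of $\tr(BP)$, using homogeneity of $w_k$). Nothing is missing.
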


Recall that a set $\cC\subseteq \bM_n$ is a convex cone if
$rA + sB \in \cS$ whenever $A, B \in \cC$ and $r,s \in [0, \infty)$.
The real affine dimension of $\cC$ is the (real) dimension of
$\Span_\IR(\cC)$, the linear span of $\cC$ over $\IR$.

\begin{lemma}  \label{lem4-new}
Suppose $P\in \bM_n$ is a rank-$k$ orthogonal projection. Then $S(P)$ is a convex cone with real affine dimension
$$\gamma(n,k) =   k^2+(n-k)^2 + n^2 -1.$$
Moreover, with respect to the direct sum decomposition $\IC^n = \Ran P \oplus \Ker P$,
\begin{equation}\label{E:form}
S(P) = \left\{ B = \begin{pmatrix} B_{11} & B_{12} \\ - B_{12}^* & B_{22} \end{pmatrix} :
B_{11} \in \bM_k, B_{22} \in \bM_{n-k}, B_{12}\in\bM_{k\times (n-k)}, \tr B_{11} = w_k(B) \right\}
\end{equation}
and
$$\span_{\IR}(S(P))=  \left\{\begin{pmatrix} B_{11} & B_{12} \\ - B_{12}^* & B_{22} \end{pmatrix} :
B_{11} \in \bM_k, B_{22} \in \bM_{n-k}, B_{12}\in\bM_{k\times (n-k)}, \tr B_{11} \in \IR \right\}.$$
\end{lemma}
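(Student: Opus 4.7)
The plan has three components: establishing the block form \eqref{E:form}, verifying the convex-cone property, and computing $\span_\IR(S(P))$. By unitary invariance of $w_k$, I may assume throughout that $P = I_k \oplus 0_{n-k}$, with blocks of matrices taken accordingly.

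For the form, let $B \in S(P)$ and vary $P$ through the family of rank-$k$ projections $P_t = e^{-tX} P e^{tX}$ where $X$ is skew-Hermitian with vanishing diagonal blocks. The function $f(t) = \tr(B P_t)$ satisfies $f(0) = \tr B_{11} = w_k(B) \ge 0$ and $|f(t)| \le f(0)$, forcing $\Re f'(0) = 0$ whenever $f(0) > 0$ (and $f(0) = 0$ gives $B = 0$). A direct computation yields $f'(0) = \tr([B,P] X) = \tr(B_{12} X_{12}^*) + \tr(B_{21} X_{12})$; letting $X_{12}$ range successively over $Y$ and $iY$ for arbitrary $Y \in \bM_{k \times (n-k)}$ extracts $\tr((B_{21} + B_{12}^*) Y) = 0$ for every $Y$, hence $B_{21} = -B_{12}^*$. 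The reverse inclusion is immediate from $\tr(BP) = \tr B_{11}$. The convex-cone property then follows from $\tr((B+C)P) = w_k(B) + w_k(C) \ge w_k(B+C) \ge \tr((B+C)P)$.

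For the span, the inclusion $\span_\IR S(P) \subseteq V := \{B \in \bM_n : B_{21} = -B_{12}^*,\ \tr B_{11} \in \IR\}$ is built into \eqref{E:form}. Decomposing any $B \in V$ into Hermitian parts $B = B_H + i B_S$, the constraint forces $B_H$ to be block-diagonal Hermitian and $B_S \in \bH_n$ to satisfy $\tr (B_S)_{11} = 0$, so $\dim_\IR V = (k^2 + (n-k)^2) + (n^2 - 1) = \gamma(n,k)$. For the reverse inclusion, it suffices to produce each block-diagonal Hermitian $H = H_{11} \oplus H_{22}$ and each $iS$ (with $S \in \bH_n$, $\tr S_{11} = 0$) in $\span_\IR S(P)$. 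The former comes from noting that for large $t$ the Hermitian block-diagonal matrix $tP + H$ has its top $k$ eigenvalues sitting in the $\Ran P$ block, so $w_k(tP + H) = tk + \tr H_{11} = \tr((tP + H) P)$; then $tP + H \in S(P)$ and $H = (tP + H) - tP \in \span_\IR S(P)$.

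The key step is producing $iS$: I will show $P + \epsilon i S \in S(P)$ for all sufficiently small $\epsilon > 0$. Since $\tr((P + \epsilon iS) P) = k + i \epsilon \tr S_{11} = k$, this reduces to proving $w_k(P + \epsilon iS) = k$. By Proposition \ref{prop:propertiesofW_k}(3), one may write $w_k(P + \epsilon iS) = \max_\theta \sum_{j=1}^k \lambda_j(\cos\theta \cdot P - \epsilon\sin\theta \cdot S)$, which equals $k$ at $\theta = 0$. For small $|\theta|$, second-order perturbation theory---the linear term in $\theta$ vanishing thanks to $\tr S_{11} = 0$---gives $\sum_{j=1}^k \lambda_j(\cos\theta P - \epsilon \sin\theta S) \le k - c\theta^2 + O(\theta^3)$ for some $c > 0$, provided $\epsilon$ is small enough. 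For $|\theta|$ bounded away from $0$ and $\pi$, the inequality $\sum_{j=1}^k \lambda_j(A) \le k\|A\|_{\mathrm{op}}$ together with $\|\cos\theta P - \epsilon \sin\theta S\|_{\mathrm{op}} \le |\cos\theta| + \epsilon \|S\|_{\mathrm{op}}$ keeps the value below $k$; near $\theta = \pi$, the matrix is close to $-P$, whose top $k$ eigenvalue sum is $\le 0 < k$. Patching these regimes yields $w_k(P + \epsilon iS) = k$; this local-to-global patching is the main obstacle in the proof.
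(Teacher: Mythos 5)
Your argument is correct in substance but takes a genuinely different route from the paper's at both of the nontrivial steps. For the block form \eqref{E:form}, you differentiate $t\mapsto \tr(BP_t)$ along curves of projections and use first-order optimality to force $B_{21}=-B_{12}^*$; the paper instead splits $B=H+iG$ into Hermitian parts, observes $\tr(PHP)=\sum_{j=1}^k\lambda_j(H)$, and invokes \cite[Lemma 4.1]{Li} to conclude $H$ is block diagonal. The two are essentially equivalent in content (your computation has a harmless sign slip in $f'(0)=\tr([P,B]X)$), but yours is self-contained where the paper cites a known extremal characterization. For the span, the paper decomposes $S_1+iS_2$ into elementary pieces --- $\bH_k\oplus\bM_{n-k}$ via a direct-sum eigenvalue comparison, $i(\mu E_{ij}+\bar\mu E_{ji})$ via an explicit $2\times 2$ elliptical range, and traceless $C\oplus 0_{n-k}$ via a trace estimate over projections --- each time using only Proposition~\ref{prop:propertiesofW_k}(4). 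You instead prove the single stronger claim that $P+\epsilon iS\in S(P)$ for every Hermitian $S$ with $\tr S_{11}=0$ and $\epsilon$ small, which yields all of $iS_2$ at once. That claim is true, and your three-regime analysis of $\theta\mapsto\sum_{j=1}^k\lambda_j(\cos\theta\, P-\epsilon\sin\theta\, S)$ identifies exactly the right mechanism (the linear term vanishes because $\tr S_{11}=0$, so the $-k\theta^2/2$ from $\cos\theta$ dominates near $\theta=0$, while crude norm bounds handle $\theta$ away from $0$); but as written this step is a sketch, not a proof --- you must check that the neighbourhood of $\theta=0$ on which the second-order expansion is valid overlaps the region $|\cos\theta|\le 1-\delta$ covered by the operator-norm bound, where $\delta$ must exceed roughly $\epsilon\|S\|$. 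This can be done (e.g.\ by writing $\sum_{j=1}^k\lambda_j(M)=\max_Q\tr(MQ)$ and bounding $|\tr(SQ)|$ by a constant times $\sqrt{k-\tr(PQ)}$, which gives a uniform quadratic bound for all $\theta$ with $\cos\theta\ge 1/2$), so I regard it as an incomplete write-up of a correct idea rather than a wrong approach; the paper's elementary decomposition buys a shorter verification at the cost of three separate cases, while your uniform perturbation argument is conceptually cleaner but requires the quantitative patching you flag.
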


\begin{proof}
Note that $B=H+i G \in S(P)$, with $H,G$ hermitian, if and only $\tr(PHP)+i\tr(PGP)=w_k(B)$.  Thus, $\tr(PGP)=0$.  	
Moreover, $w_k(B) \ge w_k(H) =  \sum_{j=1}^k \lambda_j(H) \ge \tr(PHP) = w_k(B)$ shows that $\tr(PHP) = \sum_{j=1}^k \lambda_j(H)$.
By  \cite[Lemma 4.1]{Li}, $H = H_1 \oplus H_2 \in \bH_k \oplus \bH_{n-k}$ is block diagonal with respect to the  direct sum
decomposition $\IC^n = \Ran P \oplus \Ker P$, and $\lambda_k(H_1) \geq \lambda_1(H_2)$.  Thus $B \in S(P)$ if and only if it has the form
in \eqref{E:form}.
	
Let $B,C \in S(P)$ and $b,c \geq 0$.  Then $G = bB + cC$ has the form
$\begin{pmatrix} bB_{11} + cC_{11} & G_{12} \cr -G_{12}^* & G_{22} \cr\end{pmatrix}$ and
$$w_k(G) \geq \tr GP = b \tr B_{11} + c \tr C_{11} = b w_k(B) + cw_k(C) \geq w_k(G),$$
whence $G \in S(P)$.  Thus $S(P)$ is a convex cone.

The affine dimension of $S(P)$ is the dimension of the real linear subspace spanned by $S(P)$. By \eqref{E:form}, $S(P) \subseteq S_1 + iS_2$, where
 $S_1 = \span_{\IR} \{H_1 \oplus H_2: H_1 = H_1^* \in \bM_k,
H_2 = H_2^* \in \bM_{n-k}\}$ has dimension $k^2 + (n-k)^2$, and
$S_2 = \span_{\IR} \{ G\in \bM_n: G = G^*, \tr PGP = \sum_{j=1}^k G_{jj} = 0\}$ has dimension $n^2-1$.
So, $\dim \span_{\IR} S(P) \le n^2 + k^2 + (n-k)^2 - 1 = \gamma(n,k)$.
It suffices to show that $S_1 + iS_2 \subseteq \span_{\IR} S(P)$.

{\bf Case 1.} Let $A = H_1 \oplus A_2$ such that
 $H_1^* = H_1 \in \bM_{k}$ is positive-definite and
 $A_2 \in \bM_{n-k}$
 satisfies $\lambda_k(H_1) \ge \|A_2\|$.
 Then by property (4) in  Proposition~\ref{prop:propertiesofW_k}, we see that
 $w_k(A) = \tr H_1$ so that
 $A \in S(P)$. Consequently, $\bH_k\oplus\bM_{n-k}\subseteq\span_{\IR} S(P)$.

{\bf
Case 2.}  Suppose $1 \le i \le k < j \le n$ and
$A = (I_k\oplus 0_{n-k}) + i (\mu E_{ij} + \bar \mu E_{ji})$ for $\mu \in \IC$ with $|\mu| = 1/2$. Then $A$ is permutationally similar to $A_0 \oplus I_{k-1} \oplus 0_{n-k-1}$, where $A_0 = \begin{pmatrix} 1 & i\mu  \cr i\bar \mu & 0\cr\end{pmatrix}$.  Note that $W_{1}(A_0)$ is a circular disk of radius $1/2$ centered at $1/2$. Then by property (4) in  Proposition~\ref{prop:propertiesofW_k},
$$w_k(A) \leq w_1(A_0) + w_{k-1}(I_{k-1} \oplus 0_{n-k-1}) = 1 + (k-1) = \tr (AP) \leq w_k(A),$$
whence $A \in S(P)$.
Combining this result with Case 1 shows that $i G\in\span_{\IR} S(P)$ for every Hermitian $G$ of the form $\left(\begin{smallmatrix}
		0 & G_{12} \\
		G_{12}^\ast & 0
\end{smallmatrix}\right)$ with $G_{12}\in\bM_{k\times(n-k)}.$

{\bf
Case 3.}  Suppose $k > 1$.  Let $0 \ne C \in \bM_k$ satisfy $\tr C = 0$.  Let $\epsilon = 1/(2k\|C\|)$ and let $A = (I_k + \epsilon C) \oplus 0_{n-k}$. Note that for any rank-$r$ orthogonal projection $Q \in \bM_k$ with $r<k$, $$|\tr((I_k+\epsilon C)Q)| \leq |\tr Q| + \epsilon |\tr (CQ)| \leq r + \epsilon r \|C\| < r + 1/2 < k = \tr(AP).$$
Then by property (4) in Proposition~\ref{prop:propertiesofW_k},
\begin{eqnarray*}
w_k(A) &\leq& \max_{0 \leq r \leq k} w_r(I_k + \epsilon C) \\
&=& \max \{|\tr (Q + \epsilon CQ)| : 0 \leq r \leq k, Q^2=Q^*=Q \in \bM_k, \tr Q = r\} \leq \tr (AP),
\end{eqnarray*}
so $A \in S(P)$.  Combining this result with Case 1 shows that $C \oplus 0_{n-k} \in \span_{\IR}S(P)$ for every $C \in \bM_k$ with $\tr C = 0$.

Finally, cases 1 - 3 together show that $S_1 + i S_2 \subseteq \span_{\IR}S(P)$, as desired.
\end{proof}

In the following discussion, we always let
$$\gamma(n,k) =   k^2+(n-k)^2 + n^2 -1.$$
Notice that two rank-$k$ projections  $P_1,P_2$ coincide if and only if $S(P_1)=S(P_2)$.
 We shall show more in the next lemma.

\begin{lemma} \label{lem-2proj}
  Given two rank-$k$ orthogonal projections $P,Q$ and real numbers $\theta_1,\theta_2$, we have
  $$\span_{\IR}e^{i\theta_1} S(P)=\span_{\IR} e^{i\theta_2}S(Q)$$
  if and only if $P=Q$ and $\theta_1=\theta_2$ modulo $\pi$.
\end{lemma}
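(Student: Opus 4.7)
One direction is immediate: the real span of any subset of $\bM_n$ is invariant under multiplication by $-1$, so when $P=Q$ and $\theta_2-\theta_1\in\pi\IZ$ we have $e^{i\theta_2}\span_\IR S(Q) = \pm e^{i\theta_1}\span_\IR S(P) = e^{i\theta_1}\span_\IR S(P)$. For the converse, the first step is to recast Lemma~\ref{lem4-new} in a basis-free form. Writing an arbitrary $B\in\bM_n$ uniquely as $B=H+iK$ with $H,K\in\bH_n$, the description of $\span_\IR S(P)$ in~\eqref{E:form} translates to requiring $PH=HP$ (equivalently, $B+B^*$ commutes with $P$) together with $\tr(PK)=0$ (equivalently, $\tr(PB)\in\IR$). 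Thus
\[
V(P) := \span_\IR S(P) = \{H+iK : H,K\in\bH_n,\ PH=HP,\ \tr(PK)=0\}.
\]
Setting $\phi:=\theta_2-\theta_1$, the hypothesis becomes $V(P)=e^{i\phi}V(Q)$, and the task is to show $\phi\in\pi\IZ$ together with $P=Q$.

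The core step is to test this equality on the purely imaginary elements of $V(Q)$. For any $K\in\bH_n$ with $\tr(QK)=0$ we have $iK\in V(Q)$, so
\[
e^{i\phi}(iK)=-\sin\phi\cdot K + i\cos\phi\cdot K
\]
must lie in $V(P)$; unpacking the two defining conditions of $V(P)$ yields
\[
\sin\phi\,(PK-KP)=0 \qquad\text{and}\qquad \cos\phi\,\tr(PK)=0
\]
for every such $K$. If $\sin\phi\neq 0$, then $P$ commutes with every $K\in\bH_n$ satisfying $\tr(QK)=0$; working in an orthonormal basis that diagonalizes $Q$, every Hermitian matrix with zero diagonal has this property, so $P$ commutes with $E_{ij}+E_{ji}$ and $i(E_{ij}-E_{ji})$ for all $i\neq j$, hence with every off-diagonal matrix unit $E_{ij}$. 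A standard computation forces $P=cI$, contradicting the fact that $P$ is a rank-$k$ projection with $1\le k<n$.

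Therefore $\sin\phi=0$, so $e^{i\phi}=\pm 1$ and $V(P)=V(Q)$; the second condition now gives $\tr(PK)=0$ whenever $\tr(QK)=0$, so the linear functionals $K\mapsto\tr(PK)$ and $K\mapsto\tr(QK)$ on $\bH_n$ have the same kernel. Hence $P=cQ$ for some $c\in\IR$, and taking traces forces $c=1$, i.e., $P=Q$. The main obstacle I anticipate is isolating the right test elements from $V(Q)$; once one restricts from generic $H+iK$ to the purely imaginary slice $iK$, the two defining conditions of $V(P)$ decouple into a commutation condition that pins down $\phi\pmod{\pi}$ and a trace condition that pins down $P$, after which the argument is routine.
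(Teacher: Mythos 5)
Your proof is correct, and it reaches the conclusion by a genuinely different route than the paper's. The paper argues with explicit rank-one witnesses: if $P\neq Q$ it takes a unit vector $x\in\Ker P\setminus\Ker Q$ and checks that $e^{i(\pi/4+\theta_2)}xx^*$ lies in $\span_{\IR}e^{i\theta_1}S(P)$ (every complex multiple of $xx^*$ does, by Lemma~\ref{lem4-new}) but not in $\span_{\IR}e^{i\theta_2}S(Q)$, since its compression to $\Ran Q$ has non-real trace; if $P=Q$ but $\theta_1-\theta_2\notin\pi\ZZ$ it runs the same computation with $x\in\Ran P$. You instead recast Lemma~\ref{lem4-new} intrinsically as $V(P):=\span_{\IR}S(P)=\{H+iK: H,K\in\bH_n,\ PH=HP,\ \tr(PK)=0\}$ --- a correct translation of the block description --- and test the identity $V(P)=e^{i\phi}V(Q)$ on the entire purely imaginary slice $\{iK:\tr(QK)=0\}$, which cleanly decouples the two defining conditions: the commutation condition forces $\sin\phi=0$, because a rank-$k$ projection with $1\le k<n$ cannot commute with every zero-diagonal Hermitian matrix, and the trace condition then gives $\ker\tr(Q\,\cdot)\subseteq\ker\tr(P\,\cdot)$ on $\bH_n$, hence $P=cQ$ with $c=1$ after taking traces. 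Both proofs rest on the same structural lemma, but yours replaces the paper's two-case witness argument by a systematic determination of $\phi$ modulo $\pi$ first and then of $P$; this is slightly longer but arguably more transparent about \emph{why} the statement holds. One cosmetic point: you say the two trace functionals ``have the same kernel'' when you have only established one containment, but since both functionals are nonzero (each has value $k$ at its own projection) that containment already yields proportionality, so nothing is lost.
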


\begin{proof}
If $P\neq Q$, then there exists a unit vector $x \in \Ker P$ with $x \notin \Ker Q$. Let $G = xx^*$.  By Lemma~\ref{lem4-new}, $e^{i\theta} G \in\span_{\IR}S(P)$ for all $\theta \in \IR$, so $e^{i\theta} G \in\span_{\IR}e^{i\theta_1}S(P)$ for all $\theta \in \IR$. On the other hand, $x\notin  \Ker Q$ implies $Qx\neq0$, and hence  $\tr(QGQ)=\|Qx\|^2 \ne  0$.
Take $\theta = \pi/4 + \theta_2$.  Then
$$e^{i(\theta - \theta_2)} G= \cos(\pi/4)G+i \sin(\pi/4)G\notin\span_{\IR}S(Q)$$
by Lemma~\ref{lem4-new} because $\tr Q(e^{i(\theta - \theta_2)} G)Q = e^{i(\theta - \theta_2)}\|Qx\|^2 \notin \IR$.
Hence $e^{i\theta} G\notin \span_{\IR} e^{i\theta_2} S(Q)$, so the spans are different.

Lastly, assume $P=Q$ but $\theta_1-\theta_2\notin\ZZ\pi$. Consider any unit vector
$x\in\Ran P$.
By Lemma~\ref{lem4-new} $H:=xx^\ast$ belongs to $\span_{\IR} S(P)$, so $e^{i\theta_1}H\in\span_{\IR}e^{i\theta_1}S(P)$. However,
 $$e^{i(\theta_1-\theta_2)}H=\cos(\theta_1-\theta_2)H+ i\sin(\theta_1-\theta_2)H$$ has nonzero imaginary part with $\tr(P\sin(\theta_1-\theta_2)HP)=\sin(\theta_1-\theta_2)\|x\|^2\neq0$, and as such does not belong to $\span_{\IR}S(P)$. Hence, $e^{i\theta_1}H\notin\span_{\IR}e^{i\theta_2}S(P)$.
\end{proof}
\begin{lemma}\label{conescontainment}
Let $T\colon \bM_n \rightarrow \bM_n$ be a bijective $\IR$-linear map.
 Suppose $P_1$, $P_2$ are distinct rank-$k$ orthogonal projections.
Then there do not exist a rank-$k$ orthogonal projection $Q$ and $\theta_3 \in \IR$ such that $T(e^{i\theta_1}S(P_1))\cup T(e^{i\theta_2}S(P_2))\subseteq e^{i\theta_3}S(Q)$.
\end{lemma}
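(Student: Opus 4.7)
The plan is to argue by contradiction via a real-dimension count, combining Lemma~\ref{lem-2proj} with the basic fact that a bijective $\IR$-linear map preserves the real dimension of every subspace. The key numerical input, from Lemma~\ref{lem4-new}, is that $\span_\IR S(P)$ has real dimension exactly $\gamma(n,k)$ for every rank-$k$ projection $P$; since multiplication by the unit scalar $e^{i\theta}$ is an $\IR$-linear automorphism of $\bM_n$, the rotated cone $e^{i\theta}S(P)$ has real span of the same dimension $\gamma(n,k)$.

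Suppose, toward contradiction, that such $Q$ and $\theta_3$ exist. Every set is contained in its real linear span, so the assumed containment together with the $\IR$-linearity of $T$ gives
$$T\bigl(\span_\IR e^{i\theta_1}S(P_1)+\span_\IR e^{i\theta_2}S(P_2)\bigr)\subseteq \span_\IR e^{i\theta_3}S(Q).$$
Since $T$ is bijective, the image above has the same real dimension as its preimage; combined with $\dim_\IR \span_\IR e^{i\theta_3}S(Q)=\gamma(n,k)$, this yields
$$\dim_\IR\bigl(\span_\IR e^{i\theta_1}S(P_1)+\span_\IR e^{i\theta_2}S(P_2)\bigr)\leq \gamma(n,k).$$

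To reach a contradiction, I would invoke Lemma~\ref{lem-2proj}: since $P_1\neq P_2$, the two subspaces $\span_\IR e^{i\theta_1}S(P_1)$ and $\span_\IR e^{i\theta_2}S(P_2)$ are distinct. Two distinct real subspaces of the same finite dimension $\gamma(n,k)$ cannot be comparable by inclusion, hence their sum strictly contains each of them and has real dimension $>\gamma(n,k)$, contradicting the bound above.

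The argument is essentially a dimension count, so there is no real obstacle once the structure is in place; the only points to keep track of are that the rotations by $e^{i\theta_j}$ do alter the particular subspaces (so Lemma~\ref{lem-2proj} must be applied in its full form, with the rotation factors, rather than in the $\theta_1=\theta_2=0$ case) while preserving the real dimension, and that the $\IR$-linearity of $T$ is what allows one to pull $T$ through both the $\span_\IR$ operation and the sum of subspaces.
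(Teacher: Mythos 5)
Your proposal is correct and follows essentially the same route as the paper: both arguments use Lemma~\ref{lem4-new} to get that each $\span_{\IR}e^{i\theta_j}S(P_j)$ has real dimension exactly $\gamma(n,k)$, Lemma~\ref{lem-2proj} to conclude these two spans are distinct (hence their sum, equivalently the span of the union, has dimension at least $\gamma(n,k)+1$), and the bijectivity of the $\IR$-linear map $T$ to contradict the containment in a $\gamma(n,k)$-dimensional span. The paper phrases this as counting $\gamma(n,k)+1$ linearly independent matrices in the union rather than as a sum of subspaces, but the content is identical.
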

\begin{proof}
By Lemma \ref{lem4-new}, $e^{i\theta}S(P)$ contains $\gamma(n,k)$ $\IR$-linearly independent matrices for any rank-$k$ projection $P$ and any choice of $\theta \in \IR$.
Since $P_1$ and $P_2$ are distinct, Lemma \ref{lem-2proj} implies that
$e^{i\theta_1}S(P_1)\cup e^{i\theta_2}S(P_2)$ contains at least $\gamma(n,k)+1$ real linearly independent matrices,  and hence, by the bijectivity of $T$, so does  $T(e^{i\theta_1}S(P_1))\cup T(e^{i\theta_2}S(P_2))$.  By Lemma \ref{lem4-new}, this exceeds the real affine dimension of $\span_{\IR} e^{i\theta_3} S(Q)$, so the result follows.
\end{proof}

Let $\IT = \{\mu \in \IC: |\mu| = 1\} \subseteq\IC$ denote the unimodular group.

\begin{lemma}\label{L:single_cone}
Let $A \in \bM_n$.  Suppose $w_k(A)$ is attained at a unique (rank-$k$) projection $P$, $w_k(A) = \tr (AP)$, and $\Ran P$ is a reducing subspace for $A$.
Suppose $\cC$ is a cone with affine dimension greater or equal to $\gamma(n,k) = k^2 + (n-k)^2 + n^2 -1$ satisfying $A \in \cC \subseteq \mathcal{P}(A)$.  Then $\cC \subseteq S(P)$, so its affine dimension equals $\gamma(n,k)$.
\end{lemma}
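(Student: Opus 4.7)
The plan is to prove $\cC \subseteq S(P)$ directly; the affine-dimension equality then follows from Lemma~\ref{lem4-new}. By Lemma~\ref{lem5-new} combined with the uniqueness of $P$, $\mathcal{P}(A) = \bigcup_{\theta \in [0, 2\pi)} e^{i\theta} S(P)$, so every $B \in \cC$ satisfies $|\tr BP| = w_k(B)$, and the problem reduces to showing that $\tr BP \geq 0$ for each such $B$.

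The key step exploits the reducing hypothesis. Decomposing with respect to $\IC^n = \Ran P \oplus \Ker P$ we write $A = A_1 \oplus A_2$; I claim every $B \in \cC$ with nonzero $(1,2)$-block $B_{12}$ lies in $S(P)$. Consider the ray $A + sB \in \cC \subseteq \mathcal{P}(A)$ for $s \geq 0$; by Lemma~\ref{lem1-new} and uniqueness, $A + sB \in e^{i\phi_s} S(P)$ with $\phi_s = \arg(w_k(A) + s\tr BP)$. Applying the block-structure characterization of Lemma~\ref{lem4-new} to $A + sB$, using $A_{12} = A_{21} = 0$, yields $sB_{21} = -se^{2i\phi_s}B_{12}^*$, so $e^{2i\phi_s}$ is independent of $s$ (since $B_{12}\neq 0$). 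If $\tr BP$ were not real, then $w_k(A) + s\tr BP$ never vanishes for real $s$, so $s \mapsto \phi_s$ is continuous on $[0, \infty)$ with image in a two-element discrete set and $\phi_0 = 0$; this forces $\phi_s \equiv 0$, which in turn forces $\tr BP \in \IR$, a contradiction. Hence $\tr BP \in \IR$, so $\tr BP = \pm w_k(B)$. If $\tr BP = -w_k(B)$, then taking $\beta = w_k(B)/w_k(A)$ gives $\beta A + B \in \cC \cap \mathcal{P}(A)$ with $\tr(\beta A + B)P = 0$ and hence $w_k(\beta A + B) = 0$, so $B = -\beta A$, which is block-diagonal, contradicting $B_{12} \neq 0$. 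Therefore $B \in S(P)$.

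Such a $B^{(1)} \in \cC$ with $B^{(1)}_{12} \neq 0$ exists by the dimension hypothesis, since block-diagonal matrices form a real subspace of dimension $2k^2 + 2(n-k)^2 < \gamma(n,k)$. For any block-diagonal $Y \in \cC$ and any $t > 0$, the matrix $tB^{(1)} + Y \in \cC$ has nonzero $(1,2)$-block, so the key step gives $tB^{(1)} + Y \in S(P)$: thus $tw_k(B^{(1)}) + \tr YP = w_k(tB^{(1)} + Y) \geq 0$ with $\tr YP \in \IR$. Letting $t \to 0^+$ yields $\tr YP \geq 0$, which combined with $|\tr YP| = w_k(Y)$ gives $\tr YP = w_k(Y)$, so $Y \in S(P)$. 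Therefore $\cC \subseteq S(P)$, and Lemma~\ref{lem4-new} finishes the argument.

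The main obstacle is the key step. Without the reducing hypothesis the block-structure equation of Lemma~\ref{lem4-new} applied to $A + sB$ would couple $B$ with $A$ in the off-diagonal blocks, preventing a clean pinning of $\phi_s$ from $B$ alone. Moreover, a purely convexity-based argument fails because $\mathcal{P}(A) = \bigcup_\theta e^{i\theta} S(P)$ has real span strictly larger than any single slice (Lemma~\ref{lem-2proj}), so a priori a convex subcone could span multiple rotations of $S(P)$.
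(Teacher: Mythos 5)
Your proof is correct and follows essentially the same strategy as the paper's: both use the ray $A+sB$ together with the off-diagonal block structure of $S(P)$ from Lemma~\ref{lem4-new} to pin the phase up to sign, rule out the negative sign by producing an element of $\mathcal{P}(A)$ with vanishing $(1,1)$-trace, handle block-diagonal elements by perturbing with an element having nonzero $(1,2)$-block, and invoke the count $2k^2+2(n-k)^2<\gamma(n,k)$ to guarantee such an element exists. Your continuity-of-argument step is a minor variant of the paper's direct trace-positivity computation, but the argument is the same in substance.
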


\begin{proof}
With respect to the decomposition $\IC^n = \Ran P \oplus \Ker P$, we may write $A = \begin{pmatrix} A_{11} & 0 \\ 0 & A_{22} \end{pmatrix}$ with $A_{11} \in \bM_k$, $A_{22} \in \bM_{n-k}$, and $\tr A_{11} =w_k(A) > 0$.  Note that, by Lemma \ref{lem1-new}, $\mathcal{P}(A) = \TT S(P)$.
By Lemma \ref{lem4-new}, every nonzero element of $\cC$ has the form $\mu B = \mu \begin{pmatrix} B_{11} & B_{12} \\ -B_{12}^* & B_{22} \end{pmatrix}$ with $\tr B_{11} =w_k(B) > 0$ and $\mu \in \TT$.
\medskip

We first show that if $B_{12} \ne 0$, then $\mu = 1$.

Since $\cC$ is a cone, for every $r > 0$
$A + r \mu B \in \cC \subset \TT S(P)$, so
\begin{equation}\label{E:block}
\begin{pmatrix} A_{11} + r \mu B_{11} & r \mu B_{12} \\ -r \mu B_{12}^* & A_{22} + r \mu B_{22} \end{pmatrix} = \nu \begin{pmatrix} C_{11} & C_{12} \\ - C_{12}^* & C_{22} \end{pmatrix} = \nu C
\end{equation}
for some $\nu \in \TT$ and some $k \times (n-k)$ matrix $C_{12}$ (both of which may depend on $r$).  Equating the $(2,1)$-block entries and taking the conjugate transpose gives $-r \bar{\mu} B_{12} = -\bar{\nu} C_{12}$, so
$$ \nu^2 r \bar{\mu} B_{12} = \nu C_{12} = r \mu B_{12}$$
(by equating the $(1,2)$-block entries in \eqref{E:block}), whence $\nu = \pm \mu$.

Equating the $(1,1)$-block entries in \eqref{E:block} and taking traces gives
$\tr A_{11} + r \mu \tr B_{11} = \pm \mu \tr C_{11}$.  Because $\tr A_{11}$, $\tr B_{11}$, $\tr C_{11}$ are all positive, $\mu = \pm 1$.  If $\mu = -1$ we can choose $r = \tr A_{11}/\tr B_{11}$ to get $0 = \pm \tr C_{11} = \pm w_k(C) \ne 0$, a contradiction.  So $\mu = 1$ as claimed.

If  $0 \ne  D:=D_{11} \oplus D_{22} \in \cC$ and $\cC$ contains a matrix $B$ with $B_{12}=PB(I-P)\neq 0$, then the above arguments applied to  $D+r B\in\cC$ ($r>0$), give that $\tr (D_{11}+r B_{11})>0$ for every $r>0$, and hence $\tr D_{11}\ge0$; thus $D \in S(P)$ by Lemma \ref{lem4-new}.

Consequently $\cC \subset S(P)$, unless one never has $B_{12} \ne 0$, that is, if $\cC \subset \bM_k \oplus \bM_{n-k}$.  But the latter is impossible: since
$$\dim_{\IR} \bM_k \oplus \bM_{n-k} = 2k^2 + 2(n-k)^2 < k^2 + (n-k)^2 + n^2 -1 = \gamma(n,k),$$
$\cC$ is not contained in $\bM_k \oplus \bM_{n-k}$.
\end{proof}

We have established  basic properties for parallel pairs of matrices with respect to the $k$-numerical radius.
Now, we focus on bijective linear maps preserving parallel pairs.  The following lemma shows that
$T(I)$ is a scalar multiple of $I$; see \cite[Lamma 4.3]{Lietal}. We give a short proof of the result.

\begin{lemma} \label{lem2-new}
Let $A \in \bM_n$. The following are equivalent.
\begin{itemize}
\item[{\rm (a)}] $A = \gamma I$ for some $\gamma \in \IC$.
\item[{\rm (b)}]
$A\| B$ for every $B \in \bM_n$.
\item[{\rm (c)}] $A \| P$  for every rank-$k$ orthogonal projection $P \in \bM_n$.
\end{itemize}
\end{lemma}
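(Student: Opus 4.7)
My plan is to establish the chain (a) $\Rightarrow$ (b) $\Rightarrow$ (c) $\Rightarrow$ (a). The first two implications are essentially bookkeeping, while (c) $\Rightarrow$ (a) carries the real content.

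For (a) $\Rightarrow$ (b), given $A = \gamma I$ and arbitrary $B \in \bM_n$, I would pick a rank-$k$ projection $P_0$ attaining $w_k(B)$, say $\tr(B P_0) = e^{i\phi} w_k(B)$, and then choose the unimodular $\mu$ that rotates $\tr(BP_0)$ into the same complex direction as $\gamma$ (any $\mu$ works if $\gamma = 0$). This makes $|\tr((A+\mu B)P_0)| = k|\gamma| + w_k(B) = w_k(A) + w_k(B)$, which combined with the triangle inequality for $w_k$ forces $A \| B$. The implication (b) $\Rightarrow$ (c) is immediate since rank-$k$ projections sit inside $\bM_n$.

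For (c) $\Rightarrow$ (a), my approach is to first promote the parallelism hypothesis to the much stronger statement that \emph{every} rank-$k$ projection attains $w_k(A)$, then exploit convexity of $W_k(A)$. Fix an arbitrary rank-$k$ orthogonal projection $P$. Lemma~\ref{lem1-new}(a) applied to $A \| P$ supplies a rank-$k$ projection $Q$ with $|\tr(AQ)| = w_k(A)$ and $|\tr(PQ)| = w_k(P) = k$. Using cyclicity, $\tr(PQ) = \tr(PQP)$ is a nonnegative real number bounded above by $\tr P = k$, and the saturation $\tr(PQP) = k$ forces $P(I-Q)P = 0$, hence $(I-Q)P = 0$, i.e.\ $\Ran P \subseteq \Ran Q$; since $\tr P = \tr Q = k$ this yields $Q = P$. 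Thus $|\tr(AP)| = w_k(A)$ for every rank-$k$ orthogonal projection $P$; equivalently, all of $W_k(A)$ lies on the circle $\{z \in \IC : |z| = w_k(A)\}$. Since $W_k(A)$ is convex, it must reduce to a single point, and Proposition~\ref{prop:propertiesofW_k}(5) then gives $A = \gamma I$ for some $\gamma \in \IC$.

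The main obstacle is the collapsing step $Q = P$: everything downstream relies on upgrading ``there exists an optimal projection for $A$'' (trivial) to ``every rank-$k$ projection is optimal for $A$,'' and this upgrade is precisely where the projection hypothesis in (c), rather than just (b), gets used. Once this collapse is in place, convexity of $W_k(A)$ together with Proposition~\ref{prop:propertiesofW_k}(5) finishes the argument essentially for free.
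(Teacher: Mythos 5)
Your proposal is correct and follows essentially the same route as the paper: the same choice of unimodular rotation for (a) $\Rightarrow$ (b), and for (c) $\Rightarrow$ (a) the same use of Lemma~\ref{lem1-new} to produce a common optimal projection $Q$, the collapse $Q=P$ (which the paper delegates to a citation of \cite[Lemma 4.1]{Li} but you verify directly via $\tr(PQP)=k$), and the conclusion that $W_k(A)$ lies on a circle and is therefore a singleton by convexity and Proposition~\ref{prop:propertiesofW_k}(5). No gaps.
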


\begin{proof}
  Suppose (a) holds and  $A = \gamma I$ with
$\gamma \in \IC$.  Note for any $B \in \bM_n$, there is a rank-$k$ projection
 $P$ such that $|\tr(BP)| = w_k(B)$. So there is
a complex unit $\xi$ such that
$$w_k(A + \xi B) \geq |\tr (\gamma I + \xi B) P | = |\gamma \tr P| + |\tr(BP)| = w_k(A) + w_k(B).$$
Thus, $B \in {\mathcal P}(A)$. Hence (b) holds.
The implication (b) $\Rightarrow$ (c) is clear.

Suppose (c) holds. Let $P$ be a rank-$k$ orthogonal projection. 
By Lemma \ref{lem1-new}, there exists a rank-$k$ orthogonal projection $Q$ satisfying $w_k(A) = |\tr AQ|$ and $k = w_k(P) = |\tr PQ| = \tr PQ$, forcing $Q = P$ (see~\cite[Lemma 4.1]{Li}).  Thus $|\tr AP| = w_k(A)$ for every rank-$k$ orthogonal projection $P$, so $W_k(A)$ lies on a circle.  Since $W_k(A)$ is convex (see~\cite{Li94} or~\cite{Westwick}), $W_k(A)$ must be a singleton and the result follows by property (5) of Proposition~\ref{prop:propertiesofW_k}.
\end{proof}

\begin{lemma}\label{lem6a--new} Let $T\colon \bM_n \rightarrow \bM_n$ be a bijective linear map preserving parallel pairs with respect to the $k$-numerical radius.  Suppose $A$  attains its $k$-numerical radius at a unique projection $Q$, and suppose $\Ran Q$ is reducing for $A$. Then $X=T^{-1}(A)$ also attains its $k$-numerical radius at a unique projection.
\end{lemma}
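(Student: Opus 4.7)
The plan is to argue by contradiction, combining the structural results Lemmas~\ref{lem5-new}, \ref{L:single_cone}, and \ref{conescontainment}.

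Suppose, for contradiction, that $X = T^{-1}(A)$ attains its $k$-numerical radius at two distinct rank-$k$ projections $P_1 \neq P_2$. For each $i\in\{1,2\}$, fix $\alpha_i \in \IR$ with $\tr(X P_i) = e^{i\alpha_i} w_k(X)$. Then $e^{-i\alpha_i}X \in S(P_i)$; equivalently, $X \in e^{i\alpha_i} S(P_i)$, and Lemma~\ref{lem5-new} gives $e^{i\alpha_i}S(P_i) \subseteq \mathcal{P}(X)$. Applying the linear parallel-preserving map $T$ yields
$$\cC_i := T(e^{i\alpha_i}S(P_i)) \subseteq T(\mathcal{P}(X)) \subseteq \mathcal{P}(T(X)) = \mathcal{P}(A),$$
with $A = T(X) \in \cC_i$. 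By Lemma~\ref{lem4-new} and the bijectivity of $T$, each $\cC_i$ is a convex cone of real affine dimension $\gamma(n,k)$.

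To invoke Lemma~\ref{L:single_cone} we need the sharper condition $\tr(AQ) = w_k(A)$, not merely $|\tr(AQ)| = w_k(A)$. This is arranged by a preliminary unimodular rescaling: if $\tr(AQ) = e^{i\beta}w_k(A)$, replace $A$ by $e^{-i\beta}A$. This substitution leaves every hypothesis of the lemma intact (uniqueness of $Q$ and the reducing property of $\Ran Q$ are both unaffected by a unimodular scalar) and simply multiplies $X$ by the same scalar, so the set of attaining projections of $X$ is unchanged. With this normalization, Lemma~\ref{L:single_cone} applies to each $\cC_i$ and yields $\cC_i \subseteq S(Q)$ for $i = 1, 2$.

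Consequently,
$$T\bigl(e^{i\alpha_1}S(P_1)\bigr) \cup T\bigl(e^{i\alpha_2}S(P_2)\bigr) \subseteq S(Q) = e^{i\cdot 0}S(Q),$$
which contradicts Lemma~\ref{conescontainment} since $P_1 \neq P_2$. The main subtlety is in the bookkeeping of unimodular phases: the phases $\alpha_i$ are forced by the requirement $A \in \cC_i$ (so that Lemma~\ref{L:single_cone} can even be applied), and the additional normalization $\tr(AQ) = w_k(A)$ demanded by that lemma is handled by a single preliminary rescaling of $A$.
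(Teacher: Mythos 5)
Your proof is correct and takes essentially the same approach as the paper's: a contradiction argument routing $X$ through the cones $e^{i\alpha_i}S(P_i)$ of Lemma~\ref{lem5-new}, pushing them forward by $T$, applying Lemma~\ref{L:single_cone} to trap both images in $S(Q)$, and contradicting Lemma~\ref{conescontainment}. The only cosmetic difference is that you normalize $\tr(AQ)=w_k(A)$ by rescaling $A$ at the outset, whereas the paper multiplies the image cones by the unimodular $\mu$ at the end; these are equivalent.
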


\begin{proof}  Let a unimodular $\mu\in\IC$ be  such that $\tr [(\mu A)Q]=w_k(A)=w_k(\mu A)$; clearly,  $\Ran Q$ is reducing also for $\mu A$.  Suppose, to reach a contradiction, that  $P_1$, $P_2$ are distinct rank-$k$ orthogonal projections such that $e^{-i\theta_j}\tr XP_j = w_k(X)$ for some $\theta_j \in \IR$ with $j=1,2$.
By the definition of $S(P)$ and Lemma \ref{lem5-new}, we have $X\in e^{i\theta_j}S(P_j)\subseteq {\mathcal P}(X)$. Applying $T$ gives
$$A\in T(e^{i\theta_j}S(P_j))\subseteq T({\mathcal P}(X))\subseteq \mathcal{P}(T(X)) = {\mathcal P}(A).$$
By Lemma \ref{lem4-new}, the cone $e^{i\theta}S(P)$ contains $\gamma(n,k)$ $\IR$-linearly independent matrices for any rank-$k$ projection $P$ and any choice of $\theta \in \IR$. Thus, by bijectivity of $T$, $T(e^{i\theta_{j}}S(P_j))$ is a cone of affine dimension $\gamma(n,k)$ contained in ${\mathcal P}(A)$.  As such,  $\mu T(e^{i\theta_{j}}S(P_j))$ is also a cone of dimension $\gamma(n,k)$ contained in   $\mu{\mathcal P}(A)={\mathcal P}(\mu A)$, and which contains $\mu A$.
Hence,   by Lemma~\ref{L:single_cone}, we have
$$\mu T(e^{i\theta_{j}}S(P_j))\subseteq S(Q).$$
This contradicts Lemma \ref{conescontainment}.
\end{proof}

\begin{lemma}\label{mainlemma}	Suppose $T\colon \bM_n \rightarrow \bM_n$ is a bijective unital linear map
preserving parallel pairs with respect to the $k$-numerical radius.
If $A \in \bM_n$ satisfies $W_k(A) = [-1,1]$ and $T(X) = A$, then $W_k(X) = [-1,1]$.
\end{lemma}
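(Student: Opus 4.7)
The plan is to apply Lemma~\ref{lem6a--new} to the shifts $A + \alpha I$ (using unitality, so $T^{-1}(A + \alpha I) = X + \alpha I$), combine the resulting uniqueness of attaining projections with parallel-preservation and Lemma~\ref{L:single_cone} to pin down a single-rotate cone-containment $T(S(Q_\alpha)) \subseteq e^{i\phi_\alpha}S(P_+)$, and then exploit the phase identity together with Lemma~\ref{conescontainment} and Proposition~\ref{prop:propertiesofW_k}(5) to conclude $W_k(X) = [-1,1]$.

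First reduce to the case that $A$ has simple spectrum; such matrices are dense in $\{A \in \bM_n : W_k(A) = [-1,1]\}$, and since $T^{-1}$ and $W_k$ are continuous while $\{X : W_k(X) = [-1,1]\}$ is closed in the Hausdorff topology, the general case will follow. For $A$ with simple spectrum, by Proposition~\ref{prop:propertiesofW_k}(5) $A$ is Hermitian with uniquely determined top-$k$ and bottom-$k$ eigenprojections $P_\pm$ (both with reducing ranges) satisfying $\tr AP_\pm = \pm 1$. For every $\alpha \in \IC$ with $\Re\alpha > 0$, the matrix $A + \alpha I$ has $k$-numerical radius $|1 + k\alpha|$ attained uniquely at $P_+$ with $\Ran P_+$ still reducing, so Lemma~\ref{lem6a--new} supplies a unique projection $Q_\alpha$ at which $X + \alpha I$ attains its $k$-numerical radius. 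Set $\beta_\alpha = \arg(\tr XQ_\alpha + k\alpha)$ and $\gamma_\alpha = \arg(1 + k\alpha)$; then $e^{-i\gamma_\alpha}(A+\alpha I)$ attains $w_k$ uniquely at $P_+$ with real-positive trace, hence lies in $S(P_+)$, and it also lies in the cone $T\bigl(e^{-i\gamma_\alpha+i\beta_\alpha} S(Q_\alpha)\bigr) \subseteq \TT\cdot S(P_+) = \cP(A+\alpha I)$. Lemma~\ref{L:single_cone} (applied to this cone and matrix) then forces $T\bigl(e^{-i\gamma_\alpha+i\beta_\alpha} S(Q_\alpha)\bigr) \subseteq S(P_+)$, equivalently $T(S(Q_\alpha)) \subseteq e^{i\phi_\alpha}S(P_+)$ with $\phi_\alpha = \gamma_\alpha - \beta_\alpha$. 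The symmetric statement for $\Re\alpha < 0$ produces $T(S(Q_\alpha)) \subseteq e^{i\psi_\alpha}S(P_-)$.

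By Lemma~\ref{conescontainment}, distinct projections $Q_\alpha$ must map into distinct rotates of $S(P_+)$, so $Q_\alpha \mapsto \phi_\alpha$ is injective; in particular if $Q_\alpha$ is constant on an open subset of $\{\Re\alpha>0\}$ then so is $\phi_\alpha$. But $\phi_\alpha = \arg\tfrac{1+k\alpha}{\tr XQ_\alpha+k\alpha}$ is a Mobius-type ratio in $\alpha$, and requiring its argument to be constant on a $2$-dimensional open set forces $\tr XQ_\alpha = 1$. As $\alpha \to +\infty$ along the positive real axis the unique extremal projection $Q^*_+$ maximizing $\Re\tr XQ$ is eventually stable on an open set, so $\tr XQ^*_+ = 1$; symmetrically $\tr XQ^*_- = -1$. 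Hence $\pm 1 \in W_k(X)$, and by the uniqueness in Lemma~\ref{lem6a--new} for large real $|\alpha|$ these are the extreme real parts of $W_k(X)$. If $W_k(X)$ contained a point $z_0$ with $\Im z_0 \neq 0$ that was the unique farthest point from $-k\alpha$ on an open $2$-dimensional family of $\alpha$'s (a strict corner of $W_k(X)$ in some direction with $\Re\alpha\neq 0$), the Mobius rigidity would force $z_0 \in \{-1,+1\}$, contradicting $\Im z_0 \neq 0$. Coupled with convexity and the real-extremum data, this yields $W_k(X) = [-1,1]$ via Proposition~\ref{prop:propertiesofW_k}(5).

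The main obstacle is handling the case where $W_k(X)$ has a smoothly curved boundary (e.g., an elliptical region) with no strict corner for any given direction: there the Mobius rigidity cannot be invoked directly on a $2$-dimensional open set of $\alpha$'s. Overcoming this requires a refinement — using the injectivity of Lemma~\ref{conescontainment} across analytic arcs in $\alpha$, or combining the right- and left-half-plane cone-containments across the imaginary axis — to show that the phase identity $\phi_\alpha = \gamma_\alpha - \beta_\alpha$ together with the unital normalization is incompatible with any $W_k(X)$ other than the real segment $[-1,1]$.
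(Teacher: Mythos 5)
Your setup follows the paper's strategy (shift by $\alpha I$, use unitality, invoke Lemma~\ref{lem6a--new} and Lemma~\ref{L:single_cone} to get cone containments into rotates of $S(P_+)$), but there is a genuine gap, and it is exactly the one you confess to in your last paragraph. The missing observation is that you are not using Lemma~\ref{conescontainment} at full strength. You have already established, for \emph{every} $\alpha$ with $\Re\alpha>0$, that $T\bigl(e^{i(\beta_\alpha-\gamma_\alpha)}S(Q_\alpha)\bigr)\subseteq S(P_+)$, i.e.\ all of these images lie in the \emph{same single} rotate $S(P_+)$ of $S(P_+)$ (take $\theta_3=0$). Lemma~\ref{conescontainment} says that for \emph{distinct} projections $Q_{\alpha_1}\neq Q_{\alpha_2}$ no such pair of containments into one fixed rotate can hold, \emph{regardless} of how the source cones are pre-rotated. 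Hence $Q_\alpha$ is a single projection $Q_+$ on the entire open half-plane $\{\Re\alpha>0\}$ --- not merely ``eventually stable on an open set,'' a claim you assert without justification via a limiting argument. Your weaker reading (``distinct $Q_\alpha$ map into distinct rotates, so $Q_\alpha\mapsto\phi_\alpha$ is injective'') discards precisely the information needed.

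Once $Q_\alpha\equiv Q_+$ is known, the curved-boundary obstacle you flag evaporates: setting $z_0=\tr XQ_+$ one gets $w_k(X+\alpha I)=|k\alpha+z_0|$ for all $\Re\alpha\ge 0$ by continuity, so $|k\alpha+w|\le|k\alpha+z_0|$ for every $w\in W_k(X)$; restricting to purely imaginary $\alpha$ and letting $\Im\alpha\to\pm\infty$ forces $\Im w=\Im z_0$ for all $w\in W_k(X)$. Thus $W_k(X)$ lies on a horizontal line and $X-i(\Im z_0/k)I$ is Hermitian by Proposition~\ref{prop:propertiesofW_k}(5) --- no elliptical range can occur. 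The symmetric argument on $\{\Re\alpha<0\}$ gives a constant $Q_-$ and the symmetry $\tr XQ_-=-\tr XQ_+$, and only then does the M\"obius rigidity enter, applied to $f(z)=(b+iy_0+kz)/(1+kz)$ mapping the right half-plane into a ray, forcing $b+iy_0=1$. As written, your argument does not close; the repair is not a ``refinement'' of the M\"obius step but the correct application of Lemma~\ref{conescontainment} to obtain constancy of $Q_\alpha$ on the whole half-plane. (Your density reduction to simple spectrum in place of the paper's Case~2 is fine, provided you perturb within the set $\{A:W_k(A)=[-1,1]\}$ as the paper does.)
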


\begin{proof} Since $W_k(A) = [-1,1]$, we see that $A = A^*$ by Proposition \ref{prop:propertiesofW_k} (5).
Assume that $A$ has eigenvalues $a_1 \ge \cdots \ge a_n$ with corresponding
orthonormal eigenvectors $v_1, \dots, v_n$, and $T(X) = A$.
We consider two cases.

{\bf Case 1} Suppose $a_k > a_{k+1}$, $a_{n-k} > a_{n-k+1}$.

We have  $\sum_{j=1}^k a_j = 1 = - \sum_{j=1}^k a_{n+1-j}$.
Let $P_+$  be the rank-$k$ projection whose range is spanned by $\{v_1, \dots, v_k\}$.
Let $\gamma$ be an arbitrary complex number with $\mathrm{Re}\,\gamma > 0$.
By Proposition \ref{prop:propertiesofW_k} (1), $W_k(A+\gamma I) = W_k(A) + k \gamma = [-1 + k \gamma, 1+ k \gamma]$.
Thus $w_k(A+\gamma I)$ is attained at precisely those rank-$k$ projections $P$ satisfying $\tr AP = 1$.
By~\cite[Lemma 4.1]{Li} and Schur's result that the sum of the $k$ largest diagonal entries of a hermitian matrix is bounded above
by the sum of its $k$ largest eigenvalues~\cite[Theorem B.1, p.~300]{Marsh-Olkin} (or by the interlacing property for the eigenvalues
of a compression of hermitian matrix) we see that $P=P_+$ is the unique rank-$k$ projection satisfying $|\tr(P(A+\gamma I)P)|=w_k(A+\gamma I)=|1+k\gamma |$.   We may write
$w_k(A + \gamma I) = e^{i\theta_{\gamma}} \tr (A+ \gamma I)P_+$ for some $\theta_{\gamma} \in \IR$.
	
	Suppose $w_k(X + \gamma I)$ is attained at a rank-$k$ projection $Q_{\gamma}$. Then, by Lemma~\ref{lem5-new},
\begin{equation}\label{eq:Qgamma}
  X + \gamma I \in e^{i \phi_{\gamma}} S(Q_{\gamma}) \subseteq  \mathcal{P}(X + \gamma I)
\end{equation} for some $\phi_{\gamma} \in \IR$.  Applying $T$ we have
	$$A + \gamma I = T(X + \gamma I) \in T(e^{i \phi_{\gamma}} S(Q_{\gamma})) \subseteq T(\mathcal{P}(X+\gamma I ))
	\subseteq \mathcal{P} (T(X+\gamma I)) = \mathcal{P} (A + \gamma I).$$
	Multiplying by $e^{i\theta_{\gamma}}$ gives
$$e^{i\theta_{\gamma}} (A+\gamma I) \in e^{i(\theta_{\gamma} + \phi_{\gamma})} T(S(Q_{\gamma})) \subseteq
\mathcal{P} (e^{i\theta_{\gamma}} (A+\gamma I))=\mathcal{P}  (A+\gamma I).$$
Since $w_k(e^{i\theta_{\gamma}} (A+\gamma I)) = \tr (e^{i\theta_{\gamma}} (A+\gamma I) P_+)$ is attained uniquely at $P_+$,
and since $\Ran P_+$ reduces $e^{i\theta_{\gamma}} (A+\gamma I)$, and $e^{i (\theta_{\gamma} + \phi_{\gamma})} T(S(Q_{\gamma}))$
is a cone of dimension $\gamma(n,k)$, we can apply Lemma \ref{L:single_cone} to conclude that
$e^{i (\theta_{\gamma} + \phi_{\gamma})} T(S(Q_{\gamma})) = T(e^{i (\theta_{\gamma} + \phi_{\gamma})}S(Q_{\gamma})) \subseteq S(P_+)$.

But this holds for all $\gamma$ with positive real part.  Hence, by Lemma~\ref{conescontainment},
there is a single projection $Q_+$ such that $Q_{\gamma} = Q_+$ for all $\gamma$ with $\text{Re}\, \gamma > 0$.
Let $z_0 = \tr XQ_+ \in W_k(X)$.  Then
$$w_k(X + \gamma I) = |\tr (X+\gamma I) Q_+| = |\gamma k  + \tr XQ_+| = |\gamma k + z_0|$$
for all $\gamma$ with positive real part; by continuity, this holds for all $\gamma$ with nonnegative real part.  In particular, if
$w=w_1+iw_2$ is the Cartesian decomposition of $w\in W_k(X)$ (and likewise for $\gamma=\gamma_1+i\gamma_2$ and $z_0=x_0+iy_0$), we have
$$(w_1+k\gamma_1)^2+(w_2+k\gamma_2)^2\le (x_0+k\gamma_1)^2+(y_0+k\gamma_2)^2;\qquad \gamma_1\ge0.$$
When $\gamma_1=0$ this simplifies into the inequality
$$w_1^2-x_0^2\le (y_0+k\gamma_2)^2-(w_2+k\gamma_2)^2=(y_0-w_2) (2k \gamma_2 +w_2+y_0),$$
valid for every $\gamma_2\in\IR$, which is possible only if $w_2=y_0$. Thus $W_k(X)$ lies in the
horizontal line segment $[-x_0 + iy_0\,,\, x_0 + iy_0]$. It follows that the $k$-numerical range of  $\hat{X}:=X-\frac{y_0}{k} i I_n$
lies in a real line, so $\hat{X}$ is Hermitian by \cite[5.1(a)]{Li94}. Moreover, by the uniqueness of the rank-$k$ projection
$Q_+=Q_\gamma$ in~\eqref{eq:Qgamma},
we must have $\lambda_k(\hat{X}) > \lambda_{k+1}(\hat{X})$, where as usual we organize the eigenvalues of $\hat{X}$ in descending
order $\lambda_1(\hat{X}) \geq \dots \geq \lambda_n(\hat{X})$, and $Q_+$ is the projection onto the subspace spanned by the
eigenvectors of $\hat{X}$ corresponding to $\lambda_1(\hat{X}), \dots, \lambda_k(\hat{X})$.

Now let $P_-$  be the rank-$k$ projection whose range is spanned by $\{v_{n-k+1}, \dots, v_n\}$.
Then for all complex $\gamma$ with $\mathrm{Re}\,\gamma<0$, $P = P_-$ is the only rank-$k$ projection satisfying
$|\tr(P(A+\gamma I)P)|=w_k(A+\gamma I)=|1+k\gamma |$.
Hence, we may repeat the preceding arguments to conclude that there exists a unique projection $Q_-$ for which \eqref{eq:Qgamma}  holds
true when $\mathrm{Re}\,\gamma<0$.  Consequently $\lambda_{n-k}(\hat{X}) > \lambda_{n-k+1}(\hat{X})$ and $Q_-$ is the projection onto the
subspace spanned by the eigenvectors of $\hat{X}$ corresponding to $\lambda_{n-k+1}(\hat{X}), \dots, \lambda_n(\hat{X})$.

To summarize, $|\tr (\hat{X} \pm rI) Q_{\pm}| = w_k(\hat{X} \pm rI)$ for all $r >0$, so by continuity,
$w_k(\hat{X}) = |\tr \hat{X} Q_+| = |\tr \hat{X} Q_-|$ is attained at both $Q_+$ and $Q_-$.
Since $\lambda_k(\hat{X}) > \lambda_{k+1} (\hat{X})$,
$$\tr \hat{X} Q_+ = \sum_{j=1}^k \lambda_j(\hat{X}) > \sum_{j=1}^k \lambda_{n+1-j}(\hat{X}) = \tr \hat{X} Q_-,$$
so $\tr \hat{X} Q_- = - \tr \hat{X} Q_+$. This shows that $W_k(X)=[-b+iy_0,b+iy_0]$, for some $b>0$, is symmetric
relative to imaginary axis.

Finally, let $z \in \IC$ satisfy Re\,$z > 0$.  Then $X+zI \in (b+iy_0 + kz) S(Q_+)$.  Applying $T$, and noting that
$T(S(Q_+)) \subseteq \mu S(P_+)$ for some $\mu \in \TT$, gives
$$A + zI \in (b+iy_0 + kz) T(S(Q_+)) \subset (b+iy_0 + kz) \mu S(P_+).$$
Divide by $1+kz$ to get
$$\frac{A + zI}{1+kz} \in \frac{b+iy_0 + kz}{1+kz} T(S(Q_+))
\subseteq \frac{b+iy_0 + kz}{1+kz} \mu S(P_+) \subseteq \mathcal{P}  \left( \frac{A+zI}{1+kz}\right).$$
Since $w_k ((A+zI)/(1+kz)) = \tr P_+ (A+zI)/(1+kz)$ is attained uniquely at $P_+$, we can apply Lemma 2.5 to
conclude that the cone $\cC = \frac{b+iy_0 + kz}{1+kz} \mu S(P_+) \subseteq S(P_+)$.  Thus $f(z) = \frac{b+iy_0 + kz}{1+kz}$
has constant argument for Re\,$z >0$.  In other words, the M\"{o}bius transformation $f$
maps the open right-half plane into a ray, so it must be a constant function, whence $b+iy_0 = 1$.
Thus, $W_k(X) = [-1,1]$ as asserted.

{\bf Case 2.}
Suppose
$a_k = a_{k+1}$ or $a_{n-k} = a_{n-k+1}$.
For $m = 1,2,\dots$, let $A_m = (1-1/m)A + B/m$, where $B = \sum_{j=1}^n \beta_j v_jv_j^*$
with $\beta_1 > \cdots > \beta_n$, $\sum_{j=1}^k \beta_j = 1 = - \sum_{j=1}^k \beta_{n-j+1}$.
Then $\lambda_k(A_m) > \lambda_{k+1}(A_m)$, $\lambda_{n-k}(A_m) > \lambda_{n-k+1}(A_m)$,
$W_k(A_m) = [-1,1]$ and  $A_m\rightarrow A$. Suppose $T(X_m) = A_m$. Then
$W_k(X_m) = [-1,1]$ by the result  in Case 1.
Since $A_m \rightarrow A$, we see that
$X_m \rightarrow X$ so that $\lambda_j(X_m) \rightarrow \lambda_j(X)$ for $j = 1, \dots, n$.
Hence, $W_k(X) = \left[\sum_{j=1}^k \lambda_{n-j+1}(X), \sum_{j=1}^k \lambda_j(X)\right] = [-1,1]$.
\end{proof}

\textit{Proof of Theorem \ref{main}}.
The implications (c) $\Rightarrow$ (b) $\Rightarrow$ (a) are clear.
It remains to prove (a) $\Rightarrow$ (c). To this end, suppose $T\colon \bM_n \to \bM_n$ is a bijective linear preserver
of parallel pairs with respect to the $k$-numerical radius.  By Lemma \ref{lem2-new}, $T(\IC I) = \IC I$.
Without loss of generality we may assume $T$ is unital. By Lemma \ref{mainlemma},
$T^{-1}$ maps the set $\{A \in \bM_n : W_k(A) = [-1,1]\}$ to itself.
Thus, if $A = A^*$ and $W_k(A) = [\alpha_1, \alpha_2]$ with $\alpha_2 > \alpha_1$,
then for $r = (\alpha_2 + \alpha_1)/(2k)$ and $s = (\alpha_2-\alpha_1)/2$,
the matrix  $B = (A-r I)/s$  satisfies $W_k(B) = [-1,1]$, so $W_k(T^{-1}(B)) = [-1,1]$.
As a result, $W_k(T^{-1}(A)) = [\alpha_1, \alpha_2]$.
Thus, $T^{-1}$ preserves the $k$-numerical range of Hermitian $A$.
By \cite[Theorem 2]{Li87}, $T^{-1}$ has the desired form, and hence so does $T$.
\qed

Note that if $T^{-1}$ is a unital linear map sending the set $\{A \in \bM_n : W_k(A) = [-1,1]\}$ to itself, then
$T(\bH_n) = \bH_n$. One may then use Theorem \ref{main:Hermitian}
to finish the proof. Nevertheless, it is more direct
to show that $T^{-1}$ preserves the $k$-numerical range, so that there is no need to consider maps of the form (c.2) and (c.3) in Theorem \ref{main:Hermitian}.

\section{Proof of Theorem \ref{main:Hermitian}}\label{section3}

In this section, we use the following notations. For $A \in \bH_n$, let
$${\mathcal P}(A) = \{B \in \bH_n:\;  B \hbox{ is parallel to } A\},$$
and for a rank-$k$ orthogonal projection $P\in \bH_n$ let
$$S(P) = \{B \in \bH_n: \tr(BP) = w_k(B)\}.$$
Many of the lemmas for $\bM_n$ in Section 2 can be adapted to the Hermitian case.
We list the results below and only give the proofs if they are different.

\begin{lemma} \label{section3:lem1-new}
Suppose $A, B \in \bH_n$.
Then $A\|B$ if and only if there is a rank-$k$ orthogonal projection
$P$ such that $w_k(A) =|\tr(AP)|$ and $w_k(B) = |\tr(BP)|$.
\end{lemma}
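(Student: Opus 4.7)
The plan is to mirror the proof of Lemma~\ref{lem1-new} with the crucial simplification that, for Hermitian $A, B$ and any orthogonal projection $P$, the quantities $\tr(AP)$ and $\tr(BP)$ are real. Because we are working over $\IR$, the unimodular scalar $\mu$ in the definition of parallelism is restricted to $\{-1,+1\}$, and this together with the reality of the traces makes the argument essentially a one-dimensional triangle-inequality statement.

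For the forward direction, I would assume $A \| B$, so there exists $\mu \in \{-1,+1\}$ with $w_k(A+\mu B) = w_k(A) + w_k(B)$, and pick a rank-$k$ orthogonal projection $P$ attaining $w_k(A+\mu B) = |\tr((A+\mu B)P)|$. Since $\tr(AP)$ and $\tr(BP)$ are real, I would then chain the inequalities
\[
w_k(A) + w_k(B) \;=\; |\tr(AP) + \mu\,\tr(BP)| \;\le\; |\tr(AP)| + |\tr(BP)| \;\le\; w_k(A) + w_k(B),
\]
forcing equality throughout, which gives $|\tr(AP)| = w_k(A)$ and $|\tr(BP)| = w_k(B)$.

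For the reverse direction, given such a $P$, both $\tr(AP)$ and $\tr(BP)$ are real numbers of modulus $w_k(A)$ and $w_k(B)$ respectively. I would then select $\mu \in \{-1,+1\}$ so that $\mu\,\tr(BP)$ has the same sign as $\tr(AP)$ (picking either choice when one of the traces is zero). This choice produces
\[
w_k(A+\mu B) \;\ge\; |\tr((A+\mu B)P)| \;=\; |\tr(AP)| + |\tr(BP)| \;=\; w_k(A) + w_k(B),
\]
and the opposite inequality is just the triangle inequality for $w_k$, giving $A\|B$.

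There is essentially no obstacle here, since, unlike the complex case where one has to argue about alignment of complex phases, the Hermitian hypothesis reduces the matching of $\mu$ to a sign comparison between two real numbers. The only point worth emphasizing in writing is that the reality of $\tr(AP)$ and $\tr(BP)$ is what collapses the argument to the real triangle inequality; otherwise the proof is identical in spirit to Lemma~\ref{lem1-new} and follows directly from the definitions.
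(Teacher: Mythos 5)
Your proof is correct and matches the paper's approach: the paper simply states that this lemma (like its complex counterpart, Lemma~\ref{lem1-new}) follows readily from the definitions, and your write-up supplies exactly the intended chain of triangle inequalities, with the correct observation that reality of $\tr(AP)$ and $\tr(BP)$ lets the sign $\mu\in\{-1,+1\}$ do the work that the complex phase does in the $\bM_n$ case.
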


Then from Lemma \ref{section3:lem1-new}, we get the following.

\begin{lemma} \label{section3:lem5-new}
Let $1 \le k < n$, $A \in \bH_n$.
Then
$${\mathcal P}(A) = \bigcup\{\theta S(P):\;\; \theta\in\{-1,1\}, P^2 = P = P^*, \tr P = k,
|\tr AP| = w_k(A)\}.$$
\end{lemma}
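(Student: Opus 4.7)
The plan is to derive the stated description of $\mathcal{P}(A)$ directly from Lemma \ref{section3:lem1-new}, which is the Hermitian analogue of the characterization used to prove Lemma \ref{lem5-new} in Section 2. The argument is essentially an unpacking of definitions; the only conceptual point is that in the real normed space $(\bH_n, w_k)$ the unimodular scalars are $\{-1,+1\}$ rather than the entire unit circle, which is precisely why $\theta$ ranges over $\{-1,1\}$ here instead of over $[0,2\pi)$.

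For the containment ``$\supseteq$'', I would take $B \in \theta S(P)$ for some $\theta \in \{-1,1\}$ and some rank-$k$ orthogonal projection $P$ with $|\tr(AP)| = w_k(A)$. Then $\theta B \in S(P)$ gives $w_k(B) = w_k(\theta B) = \tr(\theta B P)$, so in particular $|\tr(BP)| = w_k(B)$. Combined with $|\tr(AP)| = w_k(A)$, Lemma \ref{section3:lem1-new} yields $A \| B$, hence $B \in \mathcal{P}(A)$.

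For the containment ``$\subseteq$'', I would take $B \in \mathcal{P}(A)$ and invoke Lemma \ref{section3:lem1-new} to obtain a rank-$k$ orthogonal projection $P$ satisfying $|\tr(AP)| = w_k(A)$ and $|\tr(BP)| = w_k(B)$. Since $B$ and $P$ are Hermitian, $\tr(BP) \in \IR$. If $B = 0$ then $w_k(B) = 0$ and trivially $B \in S(P)$, so one may take $\theta = 1$. Otherwise $w_k(B) > 0$ forces $\tr(BP) \neq 0$, and setting $\theta$ equal to the sign of $\tr(BP)$ gives $\tr(\theta B P) = |\tr(BP)| = w_k(B) = w_k(\theta B)$. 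Hence $\theta B \in S(P)$, i.e., $B \in \theta S(P)$, and $P$ is among those indexing the union.

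I do not foresee any real obstacle: once Lemma \ref{section3:lem1-new} is in place, the argument is purely definitional and mirrors the one-line derivation of Lemma \ref{lem5-new} from Lemma \ref{lem1-new}. The only item worth flagging is the extraction of $\theta$ from the reality of $\tr(BP)$, which forces $\theta \in \{-1,+1\}$ exactly as the statement requires.
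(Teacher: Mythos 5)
Your argument is correct and is exactly the definitional unpacking the paper intends: the paper states this lemma with no written proof, presenting it as an immediate consequence of Lemma \ref{section3:lem1-new}, and your two containments (including the observation that $\tr(BP)\in\IR$ forces $\theta\in\{-1,1\}$) fill in precisely those routine details. Nothing is missing.
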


We fix $$\gamma^{\bH}_n(k) :=   k^2+(n-k)^2 .$$

\begin{lemma}  \label{section3:lem4-new}
Suppose $P\in \bH_n$ is a rank-$k$ orthogonal projection. Then $S(P)$ is a convex cone with real affine dimension $\gamma^{\bH}_n(k)$. Moreover, with respect to the direct sum decomposition $\IC^n = \Ran P \oplus \Ker P$,
\begin{equation}\label{E:form1}
S(P) = \left\{ B = \begin{pmatrix} B_{11} & 0 \\ 0 & B_{22} \end{pmatrix} : B_{11} \in \bH_k, B_{22} \in \bH_{n-k},  \tr B_{11} = w_k(B) \right\}
\end{equation}
and
$$\span_{\IR}(S(P))=  \bH_k\oplus\bH_{n-k}.$$
\end{lemma}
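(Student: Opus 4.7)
The plan is to mirror the proof of Lemma~\ref{lem4-new}, noting that the restriction to Hermitian matrices eliminates the imaginary component and thereby simplifies both the block form and the spanning argument; in particular, only the analogue of Case~1 of that proof will be needed.

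First I would derive the block form \eqref{E:form1}. For $B \in S(P)$, the equality $\tr(PBP) = w_k(B) = \sum_{j=1}^k \lambda_j(B)$ is the extremal case of the Cauchy interlacing inequality for the compression $PBP|_{\Ran P}$; invoking \cite[Lemma~4.1]{Li} exactly as in Lemma~\ref{lem4-new} forces $B$ to decompose as $B_{11} \oplus B_{22}$ relative to $\Ran P \oplus \Ker P$ with $B_{11} \in \bH_k$, $B_{22} \in \bH_{n-k}$, and $\lambda_k(B_{11}) \geq \lambda_1(B_{22})$. The convex cone property then transfers verbatim: for $B,C \in S(P)$ and $b,c \geq 0$, the triangle-inequality sandwich
\[
 w_k(bB+cC) \leq bw_k(B) + cw_k(C) = \tr\bigl((bB+cC)P\bigr) \leq w_k(bB+cC)
\]
forces $bB + cC \in S(P)$.

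For the affine dimension, \eqref{E:form1} immediately gives $\span_{\IR} S(P) \subseteq \bH_k \oplus \bH_{n-k}$, which has real dimension $k^2 + (n-k)^2 = \gamma^{\bH}_n(k)$. For the reverse inclusion, I plan to reuse Case~1 of Lemma~\ref{lem4-new} restricted to Hermitian inputs: for any positive definite $H_1 \in \bH_k$ and any $H_2 \in \bH_{n-k}$ with $\lambda_k(H_1) \geq \|H_2\|$, Proposition~\ref{prop:propertiesofW_k}(4) yields $w_k(H_1 \oplus H_2) = \tr H_1$, so $H_1 \oplus H_2 \in S(P)$. Any prescribed Hermitian pair $(K_1,K_2) \in \bH_k \oplus \bH_{n-k}$ can then be expressed as a difference of two such admissible pairs by first adding a sufficiently large positive multiple of $I_k$ to the first block, giving $\bH_k \oplus \bH_{n-k} \subseteq \span_{\IR} S(P)$ and hence equality.

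I do not anticipate a serious obstacle: the Hermitian version is strictly easier than its complex counterpart, since Cases~2 and~3 of Lemma~\ref{lem4-new}, which were needed to recover off-diagonal and trace-zero contributions outside the block-diagonal Hermitian subspace, become irrelevant here. The only detail requiring a brief check is that the admissibility constraint $\lambda_k(H_1) \geq \|H_2\|$ does not obstruct spanning, and this is routine since it can always be arranged after a positive shift of $H_1$.
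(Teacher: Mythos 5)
Your proof is correct and follows exactly the route the paper intends: the paper gives no separate proof of this lemma, stating only that it is the adaptation of Lemma~\ref{lem4-new} to the Hermitian setting, and your argument is precisely that adaptation (the sandwich $w_k(B)=\tr(PBP)\le\sum_{j=1}^k\lambda_j(B)\le w_k(B)$ plus \cite[Lemma~4.1]{Li} for the block form, the same cone computation, and Case~1 of Lemma~\ref{lem4-new} with a positive shift for the spanning). You also correctly identify why Cases~2 and~3 drop out in the Hermitian case, so nothing is missing.
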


Notice that two rank-$k$ projections  $P_1,P_2$ coincide if and only if $S(P_1)=S(P_2)$; in fact, we shall show more in the next lemma, analogous to Lemma \ref{lem-2proj}. But there is a slight change in both statement and proof, so we provide a proof for the lemma below.

\begin{lemma} \label{section3:lem-2proj}
  Given two rank-$k$ orthogonal projections $P,Q$ we have
  $$\span_{\IR} S(P)=\span_{\IR} S(Q)$$
  if and only if $P=Q$, or $n=2k$ and $P=I-Q$.
\end{lemma}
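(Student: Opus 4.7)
The key observation is that Lemma~\ref{section3:lem4-new} identifies $\span_{\IR} S(P)$ with the real Hermitian centralizer of $P$, namely
$$\cC_{\IR}(P) := \{B \in \bH_n : BP = PB\},$$
since the block-diagonal form in \eqref{E:form1} characterizes Hermitian matrices commuting with $P$. Thus the lemma reduces to the following question: for which rank-$k$ orthogonal projections $P,Q$ does one have $\cC_{\IR}(P) = \cC_{\IR}(Q)$?

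My first step would be to upgrade this to a statement about full (complex) centralizers in $\bM_n$. Writing any $C \in \bM_n$ as $C = H + iK$ with $H,K \in \bH_n$, the equation $CP = PC$ becomes $HP - PH = -i(KP - PK)$. The left side is skew-Hermitian while the right side is Hermitian, so both must vanish. Therefore $C$ commutes with $P$ iff both $H$ and $K$ do, which means $\cC_{\IR}(P) = \cC_{\IR}(Q)$ is equivalent to equality of the $\ast$-algebras
$$\cC(P) := \{C \in \bM_n : CP = PC\} \quad\text{and}\quad \cC(Q) := \{C \in \bM_n : CQ = QC\}.$$

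Next I would compute $\cC(P)$ explicitly. With respect to $\IC^n = \Ran P \oplus \Ker P$, $\cC(P) = \bM_k \oplus \bM_{n-k}$. Its commutant in $\bM_n$ (equivalently, its center) is $\IC I_k \oplus \IC I_{n-k} = \IC I + \IC P$. So if $\cC(P) = \cC(Q)$ then in particular $Q \in \cC(P)' = \IC I + \IC P$. The only orthogonal projections in $\IC I + \IC P$ are $0, P, I-P, I$ (a direct computation from $Q = aI + bP$ with $Q^2 = Q = Q^*$). The trace condition $\tr Q = k$ rules out $Q = 0$ and $Q = I$ (since $1 \le k < n$), leaves $Q = P$, and allows $Q = I - P$ only when $n - k = k$, i.e.\ $n = 2k$. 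This gives the forward direction.

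The converse is immediate: if $Q = P$ there is nothing to prove, and if $Q = I - P$ then $BQ = QB \iff B(I-P) = (I-P)B \iff BP = PB$, so $\cC_{\IR}(Q) = \cC_{\IR}(P)$. I do not anticipate a substantive obstacle; the only slightly delicate point is the Hermitian-to-complex centralizer reduction in the first step, but this is a standard consequence of comparing Hermitian and skew-Hermitian parts.
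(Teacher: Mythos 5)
Your proof is correct, but it takes a genuinely different route from the paper's. You identify $\span_{\IR}S(P)$ with the Hermitian commutant $\{B\in\bH_n: BP=PB\}$, pass to the full complex centralizer $\cC(P)=\bM_k\oplus\bM_{n-k}$ via the Hermitian/skew-Hermitian splitting, and then locate $Q$ inside the center $\cC(P)'=\IC I+\IC P$, where the only rank-$k$ projections are $P$ and (when $n=2k$) $I-P$. The paper instead argues by exhibiting an explicit witness: assuming $P\neq Q$ and $P\neq I-Q$, it picks a unit vector $x\in\Ker P$ with $x\notin\Ker Q\cup\Ran Q$ and checks that $xx^*$ lies in $\span_{\IR}S(P)$ but not in $\bH(\Ran Q)\oplus\bH(\Ker Q)=\span_{\IR}S(Q)$. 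The trade-off: the paper's argument is shorter and stays entirely at the level of rank-one matrices, but it hinges on the existence of that vector $x$, which is not automatic in all dimension regimes (e.g.\ when $n-k<k$ one can have $\Ker P\subseteq\Ran Q$ with $P\neq Q$, so the witness must then be sought in $\Ran P$ instead); your centralizer/bicommutant argument handles all cases uniformly, requires no case analysis on where the witness vector lives, and makes transparent exactly why the exceptional pair $(P,I-P)$ appears only when $n=2k$ (it is the trace condition $\tr Q=k$ applied to the four projections in $\IC I+\IC P$). Both proofs rest on the same input, namely the description of $\span_{\IR}S(P)$ as $\bH_k\oplus\bH_{n-k}$ from Lemma~\ref{section3:lem4-new}.
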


\begin{proof}
If $P\neq Q$ and $P\neq I-Q$, then there exists a unit vector $x \in \Ker P$ with $x \notin (\Ker Q)\cup(\mathrm{Im}\, Q)$. Let $G = xx^*$.  By Lemma~\ref{section3:lem4-new}, $G \in\span_{\IR}S(P)$. On the other hand, $x\notin  \Ker Q$ and $x\notin\mathrm{Im}Q$  implies $xx^\ast \notin \bH(\mathrm {Im}Q) \oplus \bH(\Ker Q)=\span_{\IR} S(Q)$, where $\bH(V)$ denotes the space of all self-adjoint matrices on a subspace $V\subseteq\IC^n$.

The converse is trivial.
\end{proof}

\begin{lemma}\label{section3:conescontainment}
Let $T\colon \bH_n \rightarrow \bH_n$ be a bijective $\IR$-linear map.
 Suppose $P_1$, $P_2$ are  rank-$k$ orthogonal projections with $\span_{\IR} S(P_1)\neq\span_{\IR} S(P_2)$, and let $\theta_1,\theta_2\in\{-1,1\}$.
Then there do not exist a rank-$k$ orthogonal projection $Q$ and $\theta_3 \in \{-1,1\}$ such that $T(\theta_1 S(P_1))\cup T(\theta_2 S(P_2))\subseteq \theta_3S(Q)$.
\end{lemma}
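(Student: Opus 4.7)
The plan is to follow the same dimension-counting strategy used in Lemma~\ref{conescontainment}, with the modest modification that in the Hermitian setting the ``distinctness'' condition on the two projections has been replaced by the span condition $\span_{\IR} S(P_1)\ne\span_{\IR} S(P_2)$ (which, by Lemma~\ref{section3:lem-2proj}, correctly accounts for the coincidence $\span_{\IR} S(P)=\span_{\IR} S(I-P)$ occurring when $n=2k$). The role previously played by rotating by $e^{i\theta}$ is now replaced by multiplication by $\theta\in\{-1,1\}$, which fixes the real span, so the argument is actually slightly cleaner than in the complex case.

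First I would observe that by Lemma~\ref{section3:lem4-new} the real span of $\theta S(P)=\pm S(P)$ equals $\span_{\IR} S(P)$ and has dimension exactly $\gamma^{\bH}_n(k)$. Next I would use the hypothesis $\span_{\IR} S(P_1)\ne\span_{\IR} S(P_2)$ together with the equality of their dimensions to conclude
$$\dim_{\IR}\bigl(\span_{\IR} S(P_1)+\span_{\IR} S(P_2)\bigr)\ge \gamma^{\bH}_n(k)+1,$$
so that the union $\theta_1 S(P_1)\cup \theta_2 S(P_2)$ contains at least $\gamma^{\bH}_n(k)+1$ $\IR$-linearly independent matrices.

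Then I would invoke bijectivity of $T$: since an $\IR$-linear bijection preserves linear independence, the image $T(\theta_1 S(P_1))\cup T(\theta_2 S(P_2))$ also contains at least $\gamma^{\bH}_n(k)+1$ $\IR$-linearly independent matrices. On the other hand, $\theta_3 S(Q)\subseteq \span_{\IR} S(Q)$, and by Lemma~\ref{section3:lem4-new} this span has real dimension only $\gamma^{\bH}_n(k)$. A containment $T(\theta_1 S(P_1))\cup T(\theta_2 S(P_2))\subseteq \theta_3 S(Q)$ would therefore force the left-hand side to lie inside a real subspace of dimension $\gamma^{\bH}_n(k)$, contradicting the count above.

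There is no real obstacle here; the only point that needs care is verifying that the span hypothesis truly yields a strict increase in dimension for the union (which is automatic because both spans have the same dimension), and noting that $\pm S(P)$ spans the same real subspace as $S(P)$ so that the signs $\theta_1,\theta_2,\theta_3$ play no role in the dimension computation. The structural content of the lemma has been front-loaded into Lemma~\ref{section3:lem-2proj} exactly so that the present argument reduces to this clean linear-algebraic pigeonhole.
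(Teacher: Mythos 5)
Your proof is correct and is essentially the argument the paper intends: the lemma is stated without proof precisely because it follows by the same dimension count as Lemma~\ref{conescontainment}, with the hypothesis $\span_{\IR}S(P_1)\neq\span_{\IR}S(P_2)$ (rather than $P_1\neq P_2$) supplying the needed $\gamma^{\bH}_n(k)+1$ independent elements via Lemma~\ref{section3:lem-2proj} and Lemma~\ref{section3:lem4-new}. Your observation that two distinct subspaces of equal dimension must have a strictly larger sum, and that the signs $\theta_i$ are irrelevant to real spans, correctly fills in the only details that change from the complex case.
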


\begin{lemma}\label{section3:L:single_cone}
Let $A \in \bH_n$.  Suppose $w_k(A)$ is attained at a unique (rank-$k$) projection $P$ and $w_k(A) = \tr (AP)$.  Suppose $\cC$ is a cone with affine dimension greater or equal to $\gamma^{\bH}_n(k) = k^2 + (n-k)^2$ satisfying $A \in \cC \subseteq \mathcal{P}(A)$.  Then $\cC \subseteq S(P)$, so its affine dimension equals $\gamma^{\bH}_n(k)$.
\end{lemma}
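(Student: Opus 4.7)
The plan is to adapt the proof of Lemma~\ref{L:single_cone} to the Hermitian setting, where the analysis is considerably simpler: by Lemma~\ref{section3:lem4-new} every element of $S(P)$ is block-diagonal with respect to $\IC^n = \Ran P \oplus \Ker P$, and by Lemma~\ref{section3:lem5-new} together with uniqueness of $P$ we have $\mathcal{P}(A) = S(P) \cup (-S(P))$, a union of only two convex cones instead of a $\TT$-parameter family. Moreover $S(P) \cap (-S(P)) = \{0\}$ since $w_k$ is a norm, and $A \in S(P)$ with $w_k(A) > 0$. The key identity I will exploit is that every $M \in S(P) \cup (-S(P))$ satisfies $|\tr(MP)| = w_k(M)$.

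Step 1: I will show that $\cC \cap (-S(P)) \subseteq \IR_{\le 0}\, A$. Suppose $-B \in \cC$ with $B \in S(P) \setminus \{0\}$. By convexity of the cone $\cC$, $rA - sB \in \cC$ for all $r, s \ge 0$. Choosing $r, s > 0$ with $r\,w_k(A) = s\,w_k(B)$ gives $\tr((rA - sB)P) = r\,w_k(A) - s\,w_k(B) = 0$. Since $rA - sB \in \cC \subseteq S(P) \cup (-S(P))$, the key identity forces $w_k(rA - sB) = 0$, and hence $rA = sB$, so $B$ is a positive scalar multiple of $A$.

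Step 2: I will show $\cC \cap (-S(P)) = \{0\}$ by contradiction. If $-\lambda_0 A \in \cC$ for some $\lambda_0 > 0$, then for any $X \in \cC$ and $t \ge 0$, convexity gives $X - t\lambda_0 A \in \cC$. For sufficiently large $t$, $\tr((X - t\lambda_0 A)P) = \tr(XP) - t\lambda_0 w_k(A) < 0$, ruling out $X - t\lambda_0 A \in S(P)$. Therefore $X - t\lambda_0 A \in (-S(P)) \setminus \{0\}$, and Step 1 applied to $-(X - t\lambda_0 A) = t\lambda_0 A - X \in S(P)$ forces $t\lambda_0 A - X$ to be a positive scalar multiple of $A$, whence $X \in \IR A$. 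Thus $\cC \subseteq \IR A$ has real dimension at most $1$, contradicting $\dim_{\IR} \span \cC \ge \gamma^{\bH}_n(k) = k^2 + (n-k)^2 \ge 2$.

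Combining both steps gives $\cC \subseteq S(P)$, and the equality of affine dimension follows from Lemma~\ref{section3:lem4-new}. I expect Step 1 to be the main technical point: once the identity $|\tr(MP)| = w_k(M)$ on $S(P) \cup (-S(P))$ is correctly exploited to collapse $\cC \cap (-S(P))$ onto the ray $\IR_{\le 0}\, A$, the dimension bound $\gamma^{\bH}_n(k) \ge 2$ immediately closes Step 2. Unlike in the proof of Lemma~\ref{L:single_cone}, no separate argument is needed to handle off-diagonal blocks or to rule out containment in $\bM_k \oplus \bM_{n-k}$, since $S(P)$ is already contained in $\bH_k \oplus \bH_{n-k}$, which has exactly the target dimension $\gamma^{\bH}_n(k)$.
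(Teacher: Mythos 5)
Your proof is correct. The paper does not print a proof of Lemma~\ref{section3:L:single_cone} at all --- it is one of the statements declared to be ``adapted'' from Section~2 --- so the implicit reference proof is that of Lemma~\ref{L:single_cone}. Your argument is genuinely different from that one, and in a useful way. The complex-case proof pins down the unimodular phase $\mu$ of an element $\mu B\in\cC$ by examining the off-diagonal block $B_{12}$ of $A+r\mu B$, and then needs the dimension count $\dim_{\IR}(\bM_k\oplus\bM_{n-k})<\gamma(n,k)$ to guarantee that $\cC$ contains some element with $B_{12}\neq 0$; as you correctly note, that last step has no Hermitian analogue, since here $S(P)\subseteq\bH_k\oplus\bH_{n-k}$ and $\span_{\IR}S(P)$ already has dimension exactly $\gamma^{\bH}_n(k)$, so a literal transcription of the Section~2 proof would stall. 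Your replacement exploits the fact that the phase set collapses from $\TT$ to $\{\pm1\}$: the linear functional $M\mapsto\tr(MP)$ equals $\pm w_k(M)$ on $\mathcal{P}(A)=S(P)\cup(-S(P))$, so any element of $\cC\cap(-S(P))$ can be cancelled against $A$ inside the convex cone to produce a nonzero matrix of zero norm unless it is a negative multiple of $A$, and the dimension hypothesis $\gamma^{\bH}_n(k)\ge 2$ then rules that ray out. (This trick would not close the complex case, where $\mu$ ranges over all of $\TT$ and cancellation is only available for $\mu=-1$, which is why the two proofs must diverge.) Two cosmetic points: you should record explicitly that $w_k(A)>0$ --- it follows from the uniqueness hypothesis, since $A=0$ would attain its radius at every rank-$k$ projection --- and the observations that $S(P)\cap(-S(P))=\{0\}$ and that $S(P)$ is block-diagonal, while true, are never used.
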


Now, we focus on bijective $\IR$-linear maps preserving parallel pairs on $\bH_n$. We first observe that for such a map $T$,
$T(I)$ is a scalar multiple of $I$; this follows from the following lemma.

\begin{lemma} \label{section3:lem2-new}
Let $A \in \bH_n$. The following are equivalent.
\begin{itemize}
\item[{\rm (a)}] $A = \gamma I$ for some $\gamma \in \IR$.
\item[{\rm (b)}]
$A\| B$ for every $B \in \bH_n$.
\item[{\rm (c)}] $A \| P$  for every rank-$k$ orthogonal projection $P \in \bH_n$.
\end{itemize}
\end{lemma}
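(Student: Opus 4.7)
The plan is to mirror the proof of Lemma~\ref{lem2-new} with the single modification that the unimodular scalar in the definition of parallelism is now restricted to $\{-1,+1\}$. The implications (a) $\Rightarrow$ (b) $\Rightarrow$ (c) are direct, and (c) $\Rightarrow$ (a) is a straightforward Hermitian adaptation.

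For (a) $\Rightarrow$ (b), fix $A=\gamma I$ with $\gamma \in \IR$ and an arbitrary $B \in \bH_n$. Because $W_k(B)$ is a real interval (Proposition~\ref{prop:propertiesofW_k}(2)), one of its endpoints equals $\epsilon_B w_k(B)$ for some $\epsilon_B \in \{-1,+1\}$, so one can produce a rank-$k$ orthogonal projection $P$ with $\tr(BP)=\epsilon_B w_k(B)$. Setting $\epsilon_A=\mathrm{sign}(\gamma)$ (with $\epsilon_A := 1$ in the trivial case $\gamma=0$) and $\mu=\epsilon_A\epsilon_B \in \{-1,+1\}$ gives
$$|\tr((A+\mu B)P)| = |k\gamma + \epsilon_A w_k(B)| = k|\gamma|+w_k(B) = w_k(A)+w_k(B),$$
which combined with the triangle inequality yields $A\|B$. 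The implication (b) $\Rightarrow$ (c) is immediate, and for (c) $\Rightarrow$ (a) I would repeat the argument of Lemma~\ref{lem2-new}: given any rank-$k$ orthogonal projection $P$, Lemma~\ref{section3:lem1-new} applied to $(A,P)$ furnishes a rank-$k$ projection $Q$ with $|\tr AQ|=w_k(A)$ and $|\tr PQ|=w_k(P)=k$; the standard inequality $\tr PQ \in [0,k]$ with equality only when $P=Q$ (see \cite[Lemma 4.1]{Li}) then forces $Q=P$, so $|\tr AP|=w_k(A)$ for every rank-$k$ projection $P$. Hence every point of $W_k(A)$ has absolute value $w_k(A)$; but $W_k(A)$ is a real interval by Proposition~\ref{prop:propertiesofW_k}(2), so it must be a singleton, and Proposition~\ref{prop:propertiesofW_k}(5) then supplies $A=\gamma I$ with $\gamma \in \IR$.

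I do not foresee any substantive obstacle. The argument is essentially identical to Lemma~\ref{lem2-new}, and in fact slightly easier, since Hermiticity automatically places $W_k(A)$ on the real line and makes the final reduction from ``constant modulus on a real interval'' to ``singleton'' trivial. The only point requiring mild care is the sign bookkeeping in (a) $\Rightarrow$ (b), ensuring the chosen $\mu$ remains in $\{-1,+1\}$ rather than on the full unit circle --- this is precisely where the product $\mu=\epsilon_A \epsilon_B$ comes in and uses the fact that $W_k(B) \subseteq \IR$ attains $\pm w_k(B)$ at an endpoint.
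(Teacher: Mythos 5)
Your proposal is correct and follows exactly the route the paper intends: the paper omits this proof, stating that the Hermitian lemmas are adaptations of their $\bM_n$ counterparts, and your argument is precisely the adaptation of the proof of Lemma~\ref{lem2-new}, with the sign bookkeeping $\mu=\epsilon_A\epsilon_B\in\{-1,1\}$ correctly handling the restriction of the unimodular scalar to the reals.
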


Lemma \ref{lem6a--new} also holds with a slight modification of the proof as shown below.

\begin{lemma}\label{section3:lem6a--new} Let $T\colon \bH_n \rightarrow \bH_n$ be a bijective $\IR$-linear map preserving parallel pairs with respect to the $k$-numerical radius. Suppose $A\in\bH_n$ attains its $k$-numerical radius at a unique projection $Q$. Then $X=T^{-1}(A)$ also attains its $k$-numerical radius at a unique projection $P$, or, if $n=2k$, at $P$ and $I-P$ only.
\end{lemma}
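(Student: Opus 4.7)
The plan is to mimic the argument of Lemma~\ref{lem6a--new}, using the Hermitian analogues Lemma~\ref{section3:lem4-new}--Lemma~\ref{section3:L:single_cone} in place of the $\bM_n$ versions. The new feature is that Lemma~\ref{section3:lem-2proj} permits $\span_{\IR} S(P)=\span_{\IR} S(I-P)$ when $n=2k$, and this is precisely the source of the exceptional conclusion $\{P,I-P\}$.

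I would argue by contradiction. Suppose $X=T^{-1}(A)$ attains $w_k(X)$ at two distinct rank-$k$ projections $P_1,P_2$ with $\span_{\IR} S(P_1)\neq \span_{\IR} S(P_2)$. Since $X$ is Hermitian, $\tr(XP_j)\in\IR$, so one can choose $\theta_j\in\{-1,1\}$ with $\theta_j X\in S(P_j)$; equivalently, $X\in \theta_j S(P_j)\subseteq \mathcal{P}(X)$ by Lemma~\ref{section3:lem5-new}. Applying $T$ and using parallelism preservation yields
$A\in T(\theta_j S(P_j))\subseteq T(\mathcal{P}(X))\subseteq \mathcal{P}(A)$ for $j=1,2$. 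Next, choose $\mu\in\{-1,1\}$ with $\mu\,\tr(AQ)=w_k(A)$, so $\mu A\in S(Q)$ and $Q$ is the unique rank-$k$ projection attaining $w_k(\mu A)$. Since $\mu \mathcal{P}(A)=\mathcal{P}(\mu A)=\mathcal{P}(A)$, each $\mu T(\theta_j S(P_j))$ is a cone containing $\mu A$ and lying in $\mathcal{P}(\mu A)$; by Lemma~\ref{section3:lem4-new} and the bijectivity of $T$, its real affine dimension equals $\gamma^{\bH}_n(k)$. Lemma~\ref{section3:L:single_cone} then forces $\mu T(\theta_j S(P_j))\subseteq S(Q)$ for $j=1,2$, whence
$$T(\theta_1 S(P_1))\cup T(\theta_2 S(P_2))\subseteq \mu S(Q),$$
contradicting Lemma~\ref{section3:conescontainment}.

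Therefore any two distinct rank-$k$ projections at which $w_k(X)$ is attained must satisfy $\span_{\IR} S(P_1)=\span_{\IR} S(P_2)$, which by Lemma~\ref{section3:lem-2proj} forces $n=2k$ and $P_2=I-P_1$. The existence of three or more such projections is then immediately ruled out, since pairwise $P_i=I-P_j$ would imply $P_1=P_2$. The main subtlety is the sign bookkeeping ($\theta_1,\theta_2,\mu\in\{-1,1\}$); in contrast to the $\bM_n$ case, the Hermitian single-cone lemma does not need a reducing-subspace hypothesis, since $\Ran Q$ is automatically spanned by eigenvectors of $A\in\bH_n$.
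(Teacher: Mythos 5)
Your proof is correct and follows essentially the same route as the paper: both reduce to the case $\span_{\IR}S(P_1)\neq\span_{\IR}S(P_2)$, push the two cones through $T$ into $\mathcal{P}(A)$, apply the single-cone lemma to trap both images in $S(Q)$, and derive a dimension contradiction (you cite Lemma~\ref{section3:conescontainment} where the paper inlines the same count on $\span_{\IR}(S(P_1)\cup S(P_2))$). Your explicit sign bookkeeping with $\theta_j,\mu\in\{-1,1\}$ is a slightly more careful rendering of what the paper leaves implicit, but it is not a different argument.
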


\begin{proof} Suppose  $X$ attains its $k$-numerical radius at two distinct rank-$k$ projections $P_1$, $P_2$. Then, by Lemma~\ref{section3:lem5-new},  $X$ belongs to two cones $\mu_1 S(P_1)$, $\mu_2 S(P_2) \subseteq{\mathcal P}(X)$ for some $\mu_1, \mu_2 \in \{1, -1\}$, and each of these cones has, by Lemma~\ref{section3:lem4-new}, affine dimension
	$$\gamma^{\bH}_n(k):=k^2+(n-k)^2.$$
By Lemma~\ref{section3:lem-2proj}, whenever $2k\neq n$, or $2k=n$ and $P_2\neq I-P_1$, their $\IR$-linear spans are  different. Clearly  $T$ maps both cones into cones $${\mathcal C}_i:=T(\mu_iS(P_i))\ni T(X)=A,$$
 contained in $T({\mathcal P}(X))\subseteq {\mathcal P}(T(X))={\mathcal P}(A)$.  However, $A$ achieves its $k$-numerical radius at a unique rank-$k$ projection $Q$, and hence, by Lemma~\ref{section3:L:single_cone},  both ${\mathcal C}_i$ are contained in $S(A)$. Then bijective $\IR$-linear $T$ would map the space $\span_{\IR}(S(P_1)\cup S(P_2))$, whose dimension is greater than $\gamma^{\bH}_n(k)$, onto  $\span_{\IR}({\mathcal C}_1\cup{\mathcal C}_2)\subseteq \span_{\IR}(S(A))$, a  contradiction.

We deduce that $X$ attains its $k$-numerical radius at a single rank-$k$ projection $P$ or, if $n=2k$, only at $P$ and $I-P$. \end{proof}

We now focus on proving Theorem \ref{main:Hermitian}.  There are two main steps.

\textbf{Step 1:} We first prove that the inverse map of a bijective unital $\mathbb R$-linear map preserving parallel pairs with respect to $k$-numerical radius on $\bH_n$ preserves commuting pairs.

\textbf{Step 2:} Theorem \ref{main:Hermitian} holds for $n\geq 3$ and under the extra assumption that $T^{-1}$ preserves commuting pairs.

To prove Step 1, the following lemma will turn out to be very useful.

\begin{lemma}\label{lem:combinatorial}
  Suppose we have  an $s$-tuple ($s\le n$) of positive integers ${\bf n}=(n_1,\dots,n_s)$  satisfying $n_1+\dots+n_s=n$. Then, with  integer $k\in\{1,\dots,n-1\}$ kept fixed,
    $$|\Omega_{\bf n}|:=|\{\{i_1,\dots,i_r\}:\;\ n_{i_1}+\dots+n_{i_r}=k\}|\le {n\choose k};$$
    equality holds if and only if $s=n$ and $n_1=\dots=n_s=1$.
\end{lemma}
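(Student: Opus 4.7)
The plan is to prove this by constructing an explicit injection from $\Omega_{\bf n}$ into the set of $k$-element subsets of $\{1,\ldots,n\}$, and then analyzing when this injection is actually a bijection.

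First I would partition $\{1,\ldots,n\}$ into consecutive blocks $B_1,\ldots,B_s$ with $|B_j|=n_j$, and define the map
$$\phi\colon \Omega_{\bf n}\longrightarrow \binom{\{1,\ldots,n\}}{k},\qquad \phi(\{i_1,\ldots,i_r\})=B_{i_1}\cup\cdots\cup B_{i_r}.$$
This map is well-defined because the $B_j$'s are pairwise disjoint and $|B_{i_1}\cup\cdots\cup B_{i_r}|=n_{i_1}+\cdots+n_{i_r}=k$ for any element of $\Omega_{\bf n}$. It is injective because the decomposition into blocks allows us to recover $\{i_1,\ldots,i_r\}$ from the union (an index $i$ appears iff $B_i$ is contained in $\phi(\{i_1,\ldots,i_r\})$). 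This immediately gives $|\Omega_{\bf n}|\le\binom{n}{k}$.

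For the equality case, if $s=n$ then all $n_i=1$, each $B_i$ is a singleton, and $\phi$ is clearly surjective onto all $k$-subsets, yielding $|\Omega_{\bf n}|=\binom{n}{k}$. Conversely, if $s<n$, then some $n_i\ge 2$, and I would show $\phi$ is not surjective by exhibiting a $k$-subset of $\{1,\ldots,n\}$ that is not a union of blocks. The construction splits into two cases depending on whether a chosen ``fat'' block $B_i$ (with $n_i\ge 2$) lies in a given block-union subset or not: one can always perform an element swap between $B_i$ and another block to produce a $k$-subset $S_1$ that contains a proper non-empty subset of some block, hence is not in the image of $\phi$.

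I do not anticipate a major obstacle here; the result is essentially combinatorial book-keeping. The only mildly delicate point is verifying in the equality analysis that, regardless of whether the ``fat'' block $B_i$ is inside or outside the starting block-union, a single swap produces a $k$-subset intersecting some block in a proper non-empty subset. This is handled by choosing which element to add and which to remove based on the case, using that $1\le k\le n-1$ guarantees at least one block lies inside and at least one outside any block-union of size $k$.
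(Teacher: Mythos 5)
Your proof is correct, and it rests on the same core identification as the paper's: a selection of blocks summing to $k$ corresponds to the union of those blocks, viewed as a $k$-subset of $\{1,\dots,n\}$. The difference is in execution. The paper refines the tuple one fat block at a time, producing a chain of injections $|\Omega_{\bf n}|\le|\Omega_{\bf m}|\le\cdots\le|\Omega_{\bf z}|=\binom{n}{k}$ and arguing non-surjectivity only at the final stage; you collapse this into a single direct injection $\phi$ into $\binom{\{1,\dots,n\}}{k}$ and check non-surjectivity once. Your version is arguably cleaner: injectivity of $\phi$ is immediate (an index $i$ is recovered by testing whether $B_i$ is contained in the union), and the equality analysis reduces to exhibiting one $k$-subset meeting some block $B_i$ with $n_i\ge 2$ in a proper non-empty subset, which exists whenever $1\le k\le n-1$ (take $k$ elements inside $B_i$ if $k<n_i$, or $n_i-1$ elements of $B_i$ plus $k-n_i+1$ elements outside otherwise). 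One small point to make explicit: your swap-based construction starts from a block-union $k$-subset, which presupposes $\Omega_{\bf n}\neq\emptyset$; when $\Omega_{\bf n}=\emptyset$ the strict inequality is trivial since $\binom{n}{k}\ge 1$, so nothing is lost, but the case should be acknowledged (or replaced by the existence argument just given, which needs no starting set).
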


\begin{remark}\label{rem:remark-to-combinatorial}
  The same proof can be adapted to  also show that, if $n=2k\ge 4$, then $$|\widehat{\Omega}_{\bf n}|:=|\{\{\{i_1,\dots,i_r\},\{j_1,\dots,j_{s-r}\}\}:\;\ n_{i_1}+\dots+n_{i_r}=n_{j_1}+\dots+n_{j_{s-r}}=k\}|\le\frac{1}{2} {n\choose k}$$ with equality if and only if $s=n$ and $n_1=\dots=n_s=1$.
\end{remark}

Before proving this, we illustrate it with an example.
\begin{example}
  Consider $(k,n)=(9,10)$ and an $s=7$-tuple ${\bf n}=(2,2,2,1,1,1,1)$, that is,  $n_1=n_2=n_3=2$ and $n_4=n_5=n_6=n_7=1$.  Then, $$\Omega_{\bf n}=\{\{1,2,3,4,5,6\},\{1,2,3,4,5,7\},\{1,2,3,4,6,7\},\{1,2,3,5,6,7\}\}.$$ The first collection is in $\Omega_{\bf n}$ because $n_1+n_2+n_3+n_4+n_5+n_6=2+2+2+1+1+1=9=k$; likewise for the other collections.
\end{example}
\begin{proof}  If $n_1=\dots=n_s=1$, then there are clearly ${s \choose k}$ possibilities to select $k$ items among the $s$ available, which gives the sufficiency.
Suppose $n_1\ge 2$ and consider another collection of  $t=s+n_1$  integers ${\bf m}=(m_1,\dots,m_t):=(1,\dots,1,n_2,\dots,n_s)$ (so we split $n_1$ into units and keep every other integer from the original collection).

 With every selection $n_{i_1}+\dots+n_{i_r}=k$, $2\le i_1<\dots<i_r$ (of integers from ${\bf n}$) we associate the same selection of integers from ${\bf m}$ and with every selection $n_{1}+n_{i_2}+\dots+n_{i_r}=k$, $2\le i_2<\dots<i_r$ we associate a selection $(m_1+\dots+m_{n_1})\,+\,(n_{i_2}+\dots+n_{i_r})=k$ of integers from ${\bf m}$.

 This association is one-to-one, but perhaps not surjective. So
 $$|\Omega_{\bf m}|\ge |\Omega_{\bf n}|.$$
 Proceeding recursively, at the one-but-last stage we end up with  a sequence of integers
    ${\bf o}=(o_1,\dots,o_p)$, $o_1+\dots+o_p=n$, where exactly one integer is greater than $1$. Without loss of generality we assume $o_1\ge 2$. As shown above, $|\Omega_{\bf o}|\ge |\Omega_{\bf n}|$.  With one additional step we end up with a sequence ${\bf z}=(1,\dots,1)$ of $n$ units.
    Now, $|\Omega_{\bf z}|={n\choose k}$, and the above one-to-one  association between elements from $\Omega_{\bf o}$ and $\Omega_{\bf z}$ cannot be surjective: Namely, the collection of indices in $\Omega_{\bf o}$ corresponding to  a selection $o_{i_1}+\dots+o_{i_r}=n$ with $2\le i_1<\dots<i_r$ associates to itself, while for every selection with $i_1=1$ we have, besides the associated selection $(1+\dots+1)+ o_{i_2}+ \dots +o_{i_r}$ (of integers in ${\bf z}$) also a new selection, namely the one with indices $\{1, (n-k+2),\dots,n\}\in\Omega_{\bf z}$ (or $\{1\}$ if $k=1$).
     \end{proof}

\begin{lemma}
Let $1\le k<n$. Suppose a unital, bijective, $\IR$-linear $T\colon\bH_n\to\bH_n$ preserves parallel pairs with respect to the $k$-numerical radius $w_k$ in one direction. Then, $T^{-1}$ preserves commutativity.
\end{lemma}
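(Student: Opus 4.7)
\smallskip
\noindent
\textit{Proof plan.}
The strategy is to combine Lemma~\ref{section3:lem6a--new} with the combinatorial Lemma~\ref{lem:combinatorial} to prove that if $A\in\bH_n$ has simple spectrum then so does $T^{-1}(A)$, with the maximal abelian subalgebras (MASAs) on both sides aligned via an induced bijection on rank-$k$ projections. Once this spectral rigidity is in place, commutativity preservation follows directly for pairs $(A,B)$ with $A$ of simple spectrum, and the general case by a density argument within the commutant of $A$.

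Let $A$ have distinct eigenvalues $\mu_1>\cdots>\mu_n$ with rank-one spectral projections $E_1,\dots,E_n$, and for each $k$-subset $I\subseteq\{1,\dots,n\}$ set $P_I=\sum_{i\in I}E_i$. For $\alpha>0$ sufficiently large, $A+\alpha P_I$ attains its $k$-numerical radius uniquely at $P_I$, so by Lemma~\ref{section3:lem6a--new} the image $T^{-1}(A+\alpha P_I)=T^{-1}(A)+\alpha T^{-1}(P_I)$ attains its $k$-numerical radius at a unique rank-$k$ projection $Q_I$ (or at the pair $\{Q_I,I-Q_I\}$ when $n=2k$); Lemmas~\ref{section3:lem4-new} and~\ref{section3:L:single_cone} upgrade this to the statement that $T$ maps $S(P_I)$ bijectively onto $\mu_I S(Q_I)$ for some $\mu_I\in\{-1,1\}$. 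In particular both $T^{-1}(A)$ and $T^{-1}(P_I)$ are block-diagonal relative to $Q_I$, and by varying $\beta$ in two-parameter perturbations $A+\alpha P_I+\beta P_J$ (with $\alpha>\beta>0$, preserving unique attainment at $P_I$), each $T^{-1}(P_J)$ is forced to commute with \emph{every} $Q_I$. The $Q_I$'s are pairwise distinct (modulo complement-swap when $n=2k$) by Lemmas~\ref{section3:lem-2proj} and~\ref{section3:conescontainment}.

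The crucial next step is to show that the $Q_I$'s pairwise commute, so that the algebra $\mathcal{A}$ they generate is commutative. Once this is granted, $\mathcal{A}$ is a commutative $\ast$-subalgebra of $\bM_n$ containing $\binom{n}{k}$ distinct rank-$k$ projections, and its multiplicity sequence $\mathbf{r}$ satisfies $|\Omega_\mathbf{r}|\ge\binom{n}{k}$; Lemma~\ref{lem:combinatorial} (with Remark~\ref{rem:remark-to-combinatorial} for $n=2k$) then forces $\mathcal{A}$ to be a MASA generated by $n$ rank-one projections $F_1,\dots,F_n$. Since $T^{-1}(A)$ commutes with each $Q_I$, it lies in $\mathcal{A}$ and hence has simple spectrum with spectral projections $F_1,\dots,F_n$. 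Any $B$ commuting with $A$ lies in $\span_\IR\{E_1,\dots,E_n\}\subseteq\span_\IR\{P_I,I:|I|=k\}$ (by inclusion--exclusion, valid because $k<n$), so $T^{-1}(B)\in\span_\IR\{T^{-1}(P_I),I\}$, which lies in the commutant of $\{Q_I\}$ and hence in $\mathcal{A}$. Thus $[T^{-1}(A),T^{-1}(B)]=0$. The general case, where $A$ has repeated eigenvalues, follows by approximating $A$ within its commutant by matrices of simple spectrum that still commute with $B$ and passing to the limit.

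The main obstacle is establishing pairwise commutativity of the $Q_I$'s. A natural route is to exploit the cone correspondence applied to $S(P_I)\cap S(P_J)$: for commuting $P_I,P_J$ with $|I\cap J|=\ell$, the source intersection lives in a real subspace of dimension $\ell^2+2(k-\ell)^2+(n-2k+\ell)^2$, and matching this on the target via the bijectivity of $T$ should constrain $Q_I,Q_J$ strongly enough to force commutativity with $\dim(\mathrm{Range}(Q_I)\cap\mathrm{Range}(Q_J))=\ell$. Handling the complement-swap ambiguity in the $n=2k$ case (via Remark~\ref{rem:remark-to-combinatorial}) adds a further bookkeeping challenge.
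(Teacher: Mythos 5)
Your overall architecture (apply Lemma~\ref{section3:lem6a--new} to perturbations attaining the $k$-numerical radius uniquely, transport the cones $S(P_I)$ via Lemmas~\ref{section3:lem4-new} and~\ref{section3:L:single_cone}, and invoke Lemma~\ref{lem:combinatorial} at the end) matches the paper's, but the proposal has a genuine gap exactly where you flag it: the pairwise commutativity of the projections $Q_I$ is the heart of the argument, and you leave it as an ``obstacle'' with only a heuristic. The route you suggest --- matching $\dim_{\IR}\bigl(\span_{\IR} S(P_I)\cap\span_{\IR} S(P_J)\bigr)$ against $\dim_{\IR}\bigl(\span_{\IR} S(Q_I)\cap\span_{\IR} S(Q_J)\bigr)$ --- is not carried out and is not obviously conclusive: for two rank-$k$ projections in general position the dimension of the common commutant depends on all the principal angles between their ranges, and one would have to rule out non-commuting configurations whose common commutant has the same dimension as some commuting configuration (note that the dimension does not even determine $\ell$: for $n=2k$ the values $\ell=0$ and $\ell=k$ give the same count). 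Without this step nothing downstream --- the commutative algebra $\mathcal{A}$, the MASA conclusion, the application of Lemma~\ref{lem:combinatorial} --- is available.

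The gap is fillable, in two ways. The paper sidesteps it by an iterative normalization: after conjugating so that the first norm-attaining projection is diagonal, all $X_j=T^{-1}(E_{jj})$ lie in a block-diagonal algebra $\bigoplus_i\bH_{n_i}$; every subsequent $\hat X_{\bf j}$ then lies in that algebra, so its norm-attaining projection is one of its spectral projections and hence is itself block-diagonal, and can be diagonalized by a block-diagonal unitary without disturbing the earlier normalizations. Commutativity of the successive projections is thus automatic, and Lemma~\ref{lem:combinatorial} is needed only to show the refinement terminates with all blocks of size one. Alternatively, within your own setup the missing step is a short observation for which you already have every ingredient: since $A+\alpha P_I$ is diagonal in the eigenbasis of $A$, it lies in $\span_{\IR}S(P_J)$ for every $J$, hence $T^{-1}(A+\alpha P_I)\in\span_{\IR}S(Q_J)$, i.e.\ $T^{-1}(A+\alpha P_I)$ commutes with $Q_J$; and $Q_I$, being a spectral projection of the Hermitian matrix $T^{-1}(A+\alpha P_I)$ (unique attainment forces the gap $\lambda_k>\lambda_{k+1}$), is a polynomial in that matrix and therefore also commutes with $Q_J$. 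With that inserted your argument closes; as written, it does not.
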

\begin{proof}
Note that $A,B\in\bH_n$ commute if and only if they are simultaneously diagonalizable under conjugation by a unitary matrix. By  absorbing the corresponding unitary conjugation inside $T$ we can assume that $A,B$ are already diagonal. The result will then follow once we show that $X_j:=T^{-1}(E_{jj})\in\bH_n$ are commuting.

We shall heavily use Lemmas~\ref{section3:lem5-new}--\ref{section3:lem4-new}, that is, if $X\in\bH_n$ attains its norm at a unique rank-$k$ projection $P$ (or, when $n=2k$, at $P$ and possibly  $I-P$) then ${\mathcal P}(X)=S(P)\cup (-S(P))$ (or, when $n=2k$, possibly ${\mathcal P}(X)=S(P)\cup (-S(P))\cup S(I-P)\cup (-S(I-P))$),  so for each $k\in\{1,\dots,n-1\}$,  $\span_{\IR}{\mathcal P}(X)=\span_{\IR} S(P)=\bH(\mathrm{Im} P)\oplus\bH(\Ker P)$, where $\bH(V)$ denotes the $\IR$-subspace of self-adjoint operators on a vector space $V$. We shall also use the following claim.

{\it Claim}: If $Q$ is a rank-$k$ diagonal projection, and ${\mathcal V}=\bigoplus_{j=1}^s\bH_{n_j}$, then $(\span S(Q))\cap {\mathcal V}={\mathcal V}$ if and only if the compressions of $Q$ to diagonal blocks $\bH_{n_j}$ are $0$ or $I_{n_j}$.

{\it Proof of Claim}: Recall that $\span S(Q) =\bH(\mathrm{Im}(Q))\oplus \bH(\mathrm{Ker}(Q))$. Assume the compression $Q_1$  of $Q$ to $\bH_{n_1}$ is neither identity nor zero. Then there exist normalized vectors  $x_1,y_1\in\IC^{n_1}$ such that $Q_1x_1=x_1$ and $ Q_1y_1=0$. Let $x:=x_1\oplus 0\in\IC^n$ and likewise $y:=y_1\oplus 0$. It  follows that $xy^\ast+yx^\ast \in \bH_{n_1}\oplus 0\subseteq{\mathcal V}$, however, it does not belong to  $\bH(\mathrm{Im}(Q))\oplus \bH(\mathrm{Ker}(Q))$, so it lies outside of $\span S(Q)$. In particular, $\span S(Q)\cap {\mathcal V}$ is properly contained in ${\mathcal V}$. The other direction is easy (and actually will not be required in the proof).

By using Lemma \ref{section3:lem6a--new}, we now note that, for each $i_1<\dots<i_k$, the matrix $\hat{X}_{i_1,\dots,i_k}:=X_{i_1}+\dots+X_{i_k}$ either attains its $k$-numerical radius at a unique rank-$k$ projection $P_{i_1,\dots,i_k}$, or $n=2k$ and it attains its $k$-numerical radius at only $P_{i_1,\dots,i_k}$ and $I-P_{i_1,\dots,i_k}$. In either of these two cases, Lemmas~\ref{section3:lem5-new} and~
\ref{section3:lem4-new}
give $$\span_{\IR}{\mathcal P}(\hat{X}_{\bf i})=\bH(\mathrm{Im}\,P_{\bf i})\oplus \bH(\Ker P_{\bf i})$$ for each multiindex ${\bf i}=(i_1,\dots,i_k)$. Note that the rank-$k$ projections for distinct multiindices~${\bf i}$ and~${\bf j}$ must be pairwise distinct (except when $n=2k$ and $\{\bf i\}\cup\{\bf j\}=\{1,\dots,n\}$), for otherwise, due to $\span_{\IR}S(P_{\bf i})=\span_{\IR}{\mathcal P} (\hat{X}_{\bf i})$,
$T$ would map $\span_{\IR}S(P_{\bf i})=\span_{\IR} S(P_{\bf j})$ bijectively onto $\span_{\IR}{\mathcal P}(E_{i_1i_1}+\dots+E_{i_ki_k}) $ and onto $\span_{\IR}{\mathcal P}(E_{j_1j_1}+\dots+E_{j_kj_k}) $, respectively, which are two different spaces.

Choose a multiindex ${\bf i} =(i_1,\dots,i_k)$.
There exists a unitary
$U_{\bf i}$ such that $U_{\bf i}\hat{X}_{{\bf i} } U_{\bf i}^* =
\hat{Y}_{{\bf i}} \oplus \hat{Y}'_{{\bf i}}$ with $w_k(\hat{Y}_{{\bf i} }) \geq w_k(\hat{Y}'_{{\bf i} })$ (and equality is possible only when $n=2k$). Also, there exists a permutation matrix $Z_{\bf i}$ such that $E_{i_1i_1}+\dots+E_{i_ki_k}=Z_{\bf i}^\ast(E_{11}+\dots+E_{kk})Z_{\bf i}$.
It follows that
$$
U_{\bf i}\hat{X}_{{\bf i} }U_{\bf i}^\ast\in S(E_{11}+\dots+E_{kk})
\subseteq {\mathcal P}(U_{\bf i}\hat{X}_{{\bf i} }U_{\bf i}^\ast)
=U_{\bf i}{\mathcal P}(\hat{X}_{{\bf i} })U_{\bf i}^\ast,$$
and hence 
\begin{eqnarray*}
Z_{\bf i}^\ast(E_{11}+\dots+E_{kk})Z_{\bf i}&=&E_{i_1i_1}+\dots+E_{i_ki_k}= T(\hat{X}_{{\bf i} }) \in T(U_{\bf i}^\ast S(E_{11}+\dots+E_{kk})U_{\bf i})\\
&\subseteq&
T({\mathcal P}(\hat{X}_{{\bf i} }))\subseteq {\mathcal P}(T(\hat{X}_{{\bf i} }))
= {\mathcal P}(E_{i_1i_1}+\dots+E_{i_ki_k})\\
&=&Z_{\bf i}^\ast{\mathcal P}(E_{11}+\dots+E_{kk})Z_{\bf i}
\end{eqnarray*}
Conjugate on both sides by $Z_{\bf i}$ and apply Lemma~\ref{section3:L:single_cone}, with $A:=E_{11}+\dots+E_{kk}$,  on a cone
${\mathcal C}:=Z_{\bf i}T(U_{\bf i}^\ast S(E_{11}+\dots+E_{kk})U_{\bf i})Z_{\bf i}^\ast$,
of  affine dimension $\gamma^{\bH}_n(k)$, contained in ${\mathcal P}(A)$ to get
$\mathcal C \subseteq S(E_{11}+\dots+E_{kk})$.  Taking linear spans, we have
\begin{equation*}
\begin{aligned}
    T(U_{\bf i}^\ast &(\bH_k\oplus\bH_{n-k}) U_{\bf i}) =T(\span_{\IR}(U_{\bf i}^\ast S(E_{11}+\dots+E_{kk})U_{\bf i})) \\
    &= \span_{\IR}T(U_{\bf i}^\ast S(E_{11}+\dots+E_{kk})U_{\bf i})
    \subseteq \span_{\IR} Z_{\bf i}^\ast \mathcal{P} (E_{11}+\dots+E_{kk}) Z_{\bf i}=Z_{\bf i}^\ast (\bH_k\oplus\bH_{n-k}) Z_{\bf i}.
\end{aligned}
\end{equation*}
Since $T$ is an $\IR$-linear bijection, we must actually have 
$U_{\bf i}^\ast (\bH_k\oplus\bH_{n-k}) U_{\bf i}=T^{-1}(Z_{\bf i}^\ast (\bH_k\oplus\bH_{n-k}) Z_{\bf i})$.
Finally, conjugation by the permutation matrix $Z_{\bf i}$ permutes the diagonal matrices among themselves, so $E_{jj}\in Z_{\bf i}^\ast (\bH_k\oplus \bH_{n-k}) Z_{\bf i}^\ast$, and hence
\begin{equation}\label{eq:X_j(a)}U_{\bf i}\hat{X}_{{\bf i} }U_{\bf i}^\ast\in S(E_{11}+\dots+E_{kk})\text{ and } X_j=T^{-1}(E_{jj})\in U_{\bf i}^\ast(\bH_k\oplus\bH_{n-k})U_{\bf i}\text{ for all } j=1,\dots,n.\end{equation}
 
Now, apply this to the multiindex ${\bf i}_0=(1,\dots,k)$. We may assume that $U_{{\bf i}_0}=I_n$; then $P_{{\bf i}_0 }=E_{11}+\dots+E_{kk}$ and

$$X_j\in \bH_k\oplus \bH_{n-k}\quad\hbox{ for }j=1,\dots,n;$$
so consequently also
$$\hat{X}_{\bf i}\in\bH_k\oplus \bH_{n-k}\text{ for every multiindex }{\bf i}.$$
If $(n,k)=(2,1)$ we are done. We proceed assuming $n\ge 3$.

Choose a multiindex ${\bf i}_1:=(i_1,\dots,i_k)\neq {\bf i}_0$.  Recall that  $\hat{X}_{{\bf i}_1}$ achieves its $k$-numerical radius norm at a unique rank-$k$ projection (again, except if $n=2k$ when it   might achieve the $k$-numerical radius also at its orthogonal complement) which coincides with the eigenprojection  to  the  $k$-tuple  of its eigenvalues whose sum is  maximal in modulus. Now  this norm-achieving $k$-tuple of eigenvalues consists of $k_1$ eigenvalues that belong to the first block and $k_2$ eigenvalues that belong to the second block. Then there exists a unitary $U_1=U_1'\oplus U_1''\in \bM_k\oplus \bM_{n-k}$ such that $U_1\hat{X}_{{\bf i}_1} U_1^\ast$  achieves its $k$-numerical radius at a diagonal rank-$k$ projection $P_{{\bf i}_1}=(E_{11}+\dots+E_{k_1k_1})+ (E_{(n-k_2+1)(n-k_2+1)}+\dots+E_{nn})$. Notice that, due to their block-diagonal structure, we still have that  $U_1\hat{X}_{{\bf i}_0}U_1^\ast$ achieves its $k$-numerical radius   at $E_{11}+\dots+E_{kk}$.

We can hence also assume that $U_1=I_n$.
Let  now $S_{{\bf i}_1}$ be the permutation matrix which permutes the last $k_2$ rows/columns with rows/columns indexed by $(k_1+1),\dots,k$.
Then \eqref{eq:X_j(a)} holds for two multiindices  ${\bf i}_1=(i_1,\dots,i_k)$ (and unitary $U_{{\bf i}_1}=S_{{\bf i}_1}$)  as well as for ${\bf i}_0=(1,\dots,k)$ (and unitary $U_{{\bf i}_0}=I_n$) and gives $$X_j\in S_{\bf i_1}^\ast (\bH_k\oplus\bH_{n-k})S_{\bf i_1} \cap (\bH_k\oplus\bH_{n-k});\qquad j=1,\dots,n$$
One can check that this is equivalent to
\begin{equation}\label{eq:X_j(iteration2)}
  X_j\in \bH_{k_1}\oplus\bH_{k-k_1}\oplus \bH_{n-k-k_2}\oplus \bH_{k_2};\qquad j=1,\dots,n.
\end{equation}
Thus, we may repeat the above procedure to see that every $\hat{X}_{\bf j}$ achieves its $k$-numerical radius at a unique rank-$k$ projection (and perhaps its orthogonal complement, if $n=2k$) in $\bH_{k_1}\oplus\bH_{k-k_1}\oplus \bH_{n-k-k_2}\oplus \bH_{k_2}$ . By conjugating with a  suitable block-diagonal unitary $U_{\bf j}$ in $\bM_{k_1}\oplus\bM_{k-k_1}\oplus \bM_{n-k-k_2}\oplus \bM_{k_2}$ we can achieve that this projection is the sum of $k$ diagonal matrix units $E_{tt}$, while the rank-$k$ projections associated with previously treated $\hat{X}_{{\bf i}_0}$ and $\hat{X}_{{\bf i}_1}$ remain the same. We can again assume that $U_{\bf j}=I_n$; then the equation~\eqref{eq:X_j(iteration2)} will be still valid but with more blocks of smaller size.

We can now recursively repeat the above procedure for all $n\choose k$ different matrices $\hat{X}_{\bf i}$ which have pairwise different  associated rank-$k$ projections (or, if $n=2k$, we consider the half as many pairwise distinct associated pairs of projections and their orthogonal complement);
each time either at least one block in the currently obtained version of~\eqref{eq:X_j(iteration2)} will split into two smaller blocks or else, by the Claim, the associated rank-$k$ projection will be the sum of identities in some of the blocks.

By the  pigeonhole principle it is impossible that in the final  intersection, the obtained equivalent of \eqref{eq:X_j(iteration2)}, that is,
\begin{equation}\label{eq:final}
  X_j\in\bigoplus_i^s \bH_{n_i};\qquad j=1,\dots,n
\end{equation}
would have some block   of size bigger than $1$-by-$1$. Namely, every associated rank-$k$ projection of $\hat{X}_{j_1,\dots,j_k}$ is either zero or identity in each block and as such the number of  different associated  rank-$k$ projections match the  number of  collection of indices $\{i_1,\dots,i_r\}$ satisfying $n_{i_1}+\dots+n_{i_r}=k$. By Lemma~\ref{lem:combinatorial} (if $n=2k\ge 4$ we use Remark~\ref{rem:remark-to-combinatorial} instead) this number is strictly below ${n\choose k}$ except if $s=n$ and $n_1=\dots=n_s=1$, i.e, except when in~\eqref{eq:final} all the blocks are $1$-by-$1$.

Thus, by a recursive sequence of unitary conjugations we achieve that all $X_j$ are diagonal, and hence commute.
\end{proof}

\begin{lemma}\label{lem:S(P)-is-pointed-cone}
	Let $n=2k$ and let $P \in \bH_n$ be a rank-$k$ projection.  Then $S(P) \cup -S(P)$ cannot contain a two-dimensional subspace.
\end{lemma}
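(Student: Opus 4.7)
The plan is to argue by contradiction: assume that a two-dimensional real subspace $V$ is contained in $S(P)\cup(-S(P))$.  By Lemma~\ref{section3:lem4-new}, every element of $S(P)\cup(-S(P))$ is block diagonal with respect to the decomposition $\IC^n=\Ran P\oplus\Ker P$, so each $B\in V$ has the form $B=B_{11}\oplus B_{22}$ with $B_{11},B_{22}\in\bH_k$.  For such a block-diagonal $B$, Proposition~\ref{prop:propertiesofW_k}(2) yields $W_k(B)=[\tr B_{22},\tr B_{11}]$ precisely when $\lambda_k(B_{11})\ge\lambda_1(B_{22})$ (otherwise $W_k(B)$ strictly contains this interval), and so the defining condition $\tr B_{11}=w_k(B)$ of $S(P)$ translates into
\[
B\in S(P)\iff \lambda_k(B_{11})\ge\lambda_1(B_{22})\ \text{and}\ \tr B\ge 0,
\]
with the analogous statement for $-S(P)$ obtained by replacing $\lambda_k(B_{11})\ge\lambda_1(B_{22})$ by $\lambda_1(B_{11})\le\lambda_k(B_{22})$ and $\tr B\ge 0$ by $\tr B\le 0$.

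With this in hand, set $V^+:=V\cap S(P)$ and $V^-:=V\cap(-S(P))$, so that $V=V^+\cup V^-$ and $V^-=-V^+$.  I would next show that $V^+$ is a pointed closed convex cone.  Closedness is immediate from continuity of eigenvalues and the trace.  For the convex-cone property, the Weyl inequalities $\lambda_k(X+Y)\ge\lambda_k(X)+\lambda_k(Y)$ and $\lambda_1(X+Y)\le\lambda_1(X)+\lambda_1(Y)$ on $\bH_k$ give, for $A,B\in V^+$ and $\alpha,\beta\ge 0$,
\[
\lambda_k(\alpha A_{11}+\beta B_{11})\ge\alpha\lambda_k(A_{11})+\beta\lambda_k(B_{11})\ge\alpha\lambda_1(A_{22})+\beta\lambda_1(B_{22})\ge\lambda_1(\alpha A_{22}+\beta B_{22}),
\]
while additivity of the trace takes care of $\tr(\alpha A+\beta B)\ge 0$.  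For pointedness, any $B\in V^+\cap V^-$ must satisfy $\lambda_1(B_{22})\le\lambda_k(B_{11})\le\lambda_1(B_{11})\le\lambda_k(B_{22})\le\lambda_1(B_{22})$, forcing all eigenvalues of $B$ to coincide and hence $B=cI$; the trace conditions $\tr B\ge 0$ and $\tr B\le 0$ then force $c=0$, so $V^+\cap(-V^+)=V^+\cap V^-=\{0\}$.

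Finally, fix any Euclidean norm on $V$ and let $\hat V^\pm\subset S^1$ denote the projection of $V^\pm\setminus\{0\}$ onto the unit circle of $V$; being closed convex cones in a $2$-dimensional space, $V^\pm$ project to closed arcs of $S^1$.  Pointedness of $V^+$ rules out $\hat V^+=S^1$, while the $2$-dimensionality of $V$ rules out $\hat V^+=\emptyset$ (otherwise $V=V^-$ would be a pointed cone equal to a two-dimensional subspace, impossible).  Hence $\hat V^+$ and $\hat V^-=-\hat V^+$ are nonempty proper closed arcs of $S^1$, disjoint by $V^+\cap V^-=\{0\}$, whose union covers all of $S^1$---contradicting the connectedness of $S^1$.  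The main technical point I expect is the verification of the cone property of $V^+$, namely that the eigenvalue-separation condition $\lambda_k(B_{11})\ge\lambda_1(B_{22})$ is preserved under nonnegative combinations; once this is secured, the topological finish is essentially automatic.
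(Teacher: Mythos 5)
Your proof is correct, but it takes a genuinely different and considerably longer route than the paper's. The paper's argument is essentially two lines: if $A,B$ are nonzero elements of a subspace contained in $S(P)\cup(-S(P))$, replace each by its negative if necessary so that $A,B\in S(P)$, giving $\tr(AP)=w_k(A)>0$ and $\tr(BP)=w_k(B)>0$; then for $t=-\tr(AP)/\tr(BP)<0$ the combination $A+tB$ still lies in $S(P)\cup(-S(P))$, so $w_k(A+tB)=|\tr\bigl((A+tB)P\bigr)|=|\tr(AP)+t\,\tr(BP)|=0$, forcing $A+tB=0$ and hence linear dependence. This uses only the defining identity $w_k(X)=|\tr(XP)|$ on $S(P)\cup(-S(P))$, the linearity of $X\mapsto\tr(XP)$, and the fact that $w_k$ is a norm; in particular it needs neither the hypothesis $n=2k$ nor any spectral or topological input. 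Your route instead characterizes $S(P)$ explicitly (block-diagonal, $\lambda_k(B_{11})\ge\lambda_1(B_{22})$, $\tr B\ge 0$ --- a correct description when $n=2k$, and your reduction of $\tr B_{11}\ge|\tr B_{22}|$ to $\tr B\ge 0$ under the separation condition does check out), verifies via Weyl inequalities that $V^+=V\cap S(P)$ is a pointed closed convex cone, and then contradicts the connectedness of the unit circle of $V$. Every step is sound, and the pointed-cone picture is geometrically illuminating, but the eigenvalue bookkeeping and the topological finish are work the paper's trace-cancellation trick avoids entirely; note also that the one linear functional $X\mapsto\tr(XP)$ already encodes all you need, so the full eigenvalue description of $S(P)$ is more than the lemma requires.
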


\begin{proof}
	Without loss of generality we may assume $P = I_k \oplus 0_k$.  Suppose $A, B \in \bH_n$ are nonzero matrices whose span is contained in $S(P) \cup -S(P)$.  Without loss of generality we may suppose $A, B \in S(P)$.  We may write $A=A_1 \oplus A_2$, $B = B_1 \oplus B_2$ with $\tr A_1 = w_k(A) >0$ and $\tr B_1 = w_k(B) > 0$.  Choose $t < 0$ such that $\tr A_1 + t \tr B_1 = 0$.  Since $A+tB \in S(P) \cup -S(P)$, we must have $0 = |\tr (A_1 + tB_1)| = w_k (A+tB)$, so $A+tB = 0$.  Thus $A$ and $B$ are linearly dependent.
\end{proof}

\begin{lemma} \label{commute}
	Let $n \geq 3$ and $1 \leq k < n$. Suppose $T\colon\bH_n \to \bH_n$ is a unital surjective linear map that preserves parallel pairs with respect to the $k$-numerical radius and $T^{-1}$ preserves commutativity.  Then there exists a unitary $U$ such that
	\begin{enumerate}
		\item[{\rm (a)}] $T^{-1}$ has the form $A \mapsto U^*AU$ or $A \mapsto U^*A^tU$, or
		\item[{\rm (b)}] $n=2k$ and there exists a nonzero $c \in \IR$ such that 
		$T^{-1}(A) = cU^*AU$ for all $A \in \bH_n^0$, \ or \  $T^{-1}(A) = cU^*A^tU$ for all $A \in \bH_n^0$, where $\bH_n^0$ denotes the set of trace zero matrices in $\bH_n$.
	\end{enumerate}
\end{lemma}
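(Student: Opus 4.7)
The plan is to exploit the commutativity hypothesis on $T^{-1}$ to pin it down on a maximal abelian subalgebra (MASA), then use the parallel-preservation hypothesis to classify the restriction, and finally globalize.

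\textit{Stage 1 (reduction to the diagonal MASA).} Since $\{E_{ii}\}_{i=1}^{n}$ commute pairwise, $\{T^{-1}(E_{ii})\}_{i=1}^{n}$ is a commuting family of Hermitians summing to $T^{-1}(I)=I$. Simultaneously diagonalize them by a unitary $U_0$, and absorb $\mathrm{Ad}(U_0^*)$ into $T$ (a $w_k$-isometry, so it preserves all the hypotheses). Then $T^{-1}$ maps the diagonal MASA $\mathcal{D}$ bijectively onto itself, and the restriction $\phi^{-1}:=T|_{\mathcal{D}}$ is a unital bijective $\IR$-linear parallel-preserver on $\mathcal{D}\cong\IR^n$ with respect to the polyhedral norm $\|(x_1,\dots,x_n)\|_{k,n}:=\max_{|S|=k}\bigl|\sum_{i\in S}x_i\bigr|$ induced from $w_k$.

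\textit{Stage 2 (classification on $\mathcal{D}$).} I would now classify such $\phi^{-1}$. The extreme rays of the $\|\cdot\|_{k,n}$ unit ball on $\IR^n$ are the $\pm$indicator vectors of $k$-subsets, corresponding on $\mathcal{D}$ to the diagonal rank-$k$ projections $\pm\sum_{i\in S}E_{ii}$. Arguments parallel to those in Section~3 (using achievers of $w_k$ and Lemmas~\ref{section3:lem5-new}--\ref{section3:lem4-new}) show that the parallel relations among these extreme vectors and their small perturbations determine $\phi^{-1}$ up to a signed coordinate permutation $\pi$, a scalar $\alpha\in\IR\setminus\{0\}$, and a trace-shift: $\phi^{-1}(x)=\alpha\pi(x)+\tfrac{1-\alpha}{n}(\mathbf{1}^{\top}x)\mathbf{1}$, where the shift coefficient is fixed by unitality $\phi^{-1}(\mathbf{1})=\mathbf{1}$. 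A further test with the pair $(\sum_{i\in S}E_{ii},\,-\sum_{j\notin S}E_{jj})$ distinguishes the two regimes: when $n=2k$ this pair is parallel (both achieve $w_k=k$ at complementary rank-$k$ projections) and is compatible with any $\alpha$; when $n\neq 2k$ the asymmetry forces $\alpha=1$, killing the shift.

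\textit{Stage 3 (globalization).} Every Hermitian matrix lies in a MASA, and every MASA is of the form $V\mathcal{D}V^*$ for some unitary $V$. Running Stage 1 on an arbitrary MASA and cross-matching the local classifications via commutativity preservation (two MASAs meet in enough projections to propagate consistency) forces the signed permutation and scalar $\alpha$ from Stage 2 to be globally consistent, yielding $T^{-1}(A)=\alpha U^*AU+\tfrac{1-\alpha}{n}(\tr A)I$, or the transposed variant with $A^t$ in place of $A$, for a single unitary $U$. When $n\neq 2k$, Stage 2 forces $\alpha=1$, giving case~(a). When $n=2k$, any $\alpha=c\in\IR\setminus\{0\}$ is permitted, and on $A\in\bH_n^0$ the formula collapses to $T^{-1}(A)=cU^*AU$ (or transpose), giving case~(b).

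\textit{Main obstacle.} The crux is Stage~2: classifying unital bijective $\IR$-linear parallel-preservers of the polyhedral norm $\|\cdot\|_{k,n}$ on $\IR^n$ with enough precision to detect the $n=2k$ exception. The technical challenge is to identify a small family of diagonal parallel test pairs whose preservation forces both the signed-permutation skeleton and, in the asymmetric regime $n\neq 2k$, the vanishing of the scalar shift.
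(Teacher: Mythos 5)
Your proposal has a genuine gap, and it bypasses the tool that the lemma's hypotheses are designed to invoke. The assumptions that $T^{-1}$ preserves commutativity and that $n\ge 3$ are there precisely so that one can apply the Choi--Jafarian--Radjavi theorem \cite[Theorem 2]{CJR}, which immediately yields $T^{-1}(A)=cU^*AU+f(A)I$ or $T^{-1}(A)=cU^*A^tU+f(A)I$ for a unitary $U$, a nonzero scalar $c$, and a linear functional $f$; the paper's entire proof then consists of pinning down $f$ (showing $f=0$ when $n\ne 2k$, and that $f$ is a multiple of the trace when $n=2k$). Your Stages 1 and 3 amount to re-deriving a version of that theorem by hand through MASAs, and the globalization step (``two MASAs meet in enough projections to propagate consistency'') is exactly the hard part of such a re-derivation; it is left entirely unproved. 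Stage 2 --- the classification of unital bijective parallel preservers of the polyhedral norm $\|x\|_{k,n}$ on $\IR^n$ --- is likewise asserted rather than proved, and you flag it yourself as the main obstacle. Note moreover that the form you assert, $\phi^{-1}(x)=\alpha\pi(x)+\tfrac{1-\alpha}{n}(\mathbf{1}^{\top}x)\mathbf{1}$, already builds in that the shift is a multiple of $(\mathbf{1}^{\top}x)\mathbf{1}$ rather than a general rank-one term $(\mathbf{c}^{\top}x)\mathbf{1}$; proving that $\mathbf{c}$ is a multiple of $\mathbf{1}$ (equivalently, that $f$ is a multiple of the trace) is precisely the delicate point of the paper's $n=2k$ analysis, which requires Lemma~\ref{lem:S(P)-is-pointed-cone}.

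In addition, your proposed distinguishing test is based on a false premise: the pair $(P,-(I-P))$ with $P=\sum_{i\in S}E_{ii}$ is \emph{not} parallel, even when $n=2k$. Indeed $w_k\bigl(P\pm(-(I-P))\bigr)$ equals $w_k(2P-I)=k$ or $w_k(I)=k$, whereas $w_k(P)+w_k(-(I-P))=k+\min(k,n-k)>k$; equivalently, $P$ attains its $k$-numerical radius only at the projection $P$ and (for $n=2k$) $I-P$ only at $I-P$, so by Lemma~\ref{section3:lem1-new} they share no norming projection. Thus the test cannot ``force $\alpha=1$'' when $n\ne 2k$. The paper instead rules out a nonzero shift when $n\ne 2k$ by showing that otherwise the map $A\mapsto\sum_{i=1}^k\lambda_i(A)+\sum_{i=1}^k\lambda_{n+1-i}(A)$ would have to be linear, which fails on explicit examples. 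As it stands, the proposal's crucial steps are either unproved or rest on an incorrect parallelism claim.
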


\begin{proof}
Because $T^{-1}$ is a linear surjective map preserving commutativity, by \cite[Theorem 2]{CJR}, there exists a unitary matrix $U$, a linear functional $f$ on $\bH_n$, and a nonzero scalar $c$ such that $T^{-1}$ has one of the following forms:
	\begin{enumerate}
		\item $T^{-1}(A) = cU^* A U + f(A) I$ for all $A \in \bH_n$, or
		\item $T^{-1}(A) = cU^* A^t U + f(A) I$ for all $A \in \bH_n$.
	\end{enumerate}
Consider the first case; the second case is similar.  Since $T$ is unital, $I = cI + f(I) I$, so $1 = c  + f(I)$.  For all $r \in \IR$,
\begin{equation*}
  \begin{aligned}
		T^{-1}(A+rI) &= cU^*AU + (cr + f(A) + r f(I)) I = cU^*AU + (r + f(A)) I \\
&= c(U^*AU + \frac{1}{c} (r + f(A)) I )
	\end{aligned}
\end{equation*}
Now suppose $A$ has distinct eigenvalues $\lambda_1(A) > \dots > \lambda_n(A)$ and let
$$s =  -\frac{1}{2k} \left( \sum_{i=1}^k \lambda_i(A) + \sum_{i=1}^k \lambda_{n+1-i}(A) \right).$$
Then $A+rI$ attains its numerical radius at a unique rank-k projection for all $r \ne s$.  Consider the case $n \ne 2k$.  Then Lemma \ref{section3:lem6a--new} implies that $T^{-1}(A+rI)$ also attains its numerical radius at a unique rank-k projection for all
$r \ne s$.   But $U^*AU + tI$ attains its $k$-numerical radius at a unique rank-$k$ projection for all $t \ne s$, so
$$s = \frac{1}{c}(s + f(A)).$$
Thus
$$f(A) = s (c-1) = -s f(I) = \frac{f(I)}{2k}  \left( \sum_{i=1}^k \lambda_i(A) + \sum_{i=1}^k \lambda_{n+1-i}(A) \right);$$
by continuity, this holds for all $A \in \bH_n$.
If $f$ is the zero functional the result follows.  Now suppose $f(I) \ne 0$.  This implies that the map $g$ defined by
$$g(A) = \left( \sum_{i=1}^k \lambda_i(A) + \sum_{i=1}^k \lambda_{n+1-i}(A) \right)$$
is linear, a contradiction. 
(For example, when $n > 2k$, observe that $g(A+B) \ne g(A) + g(B)$ when $A = I_k \oplus (-I_k) \oplus 0_{n-2k}$ and
$B = I_k \oplus 0_1 \oplus (-I_k) \oplus 0_{n-2k-1}$; while if $n<2k$ we consider $A=I_k\oplus 0_{n-k}$ and $B=0_{n-k}\oplus I_k$.)

Now let $n = 2k$.  It suffices to prove that $f$ is a scalar multiple of the trace functional $X\mapsto\Tr(X)$. Note that
the inverse of the unital linear map $T^{-1}\colon Y\mapsto c U^\ast YU+ f(Y) I$ is
$$T(X)=Y=\frac{1}{c} UXU^\ast -\frac{1}{c}f(UXU^\ast)I.$$ To see this, start with $X=T^{-1}(Y)=cU^\ast YU+f(Y)$, then $Y=\frac{1}{c} UXU^\ast -\frac{1}{c}f(Y)I$; applying $f(\cdot)$ and using $f(I)=1-c$
we deduce that $T(X)=Y=\frac{1}{c} UXU^\ast -\frac{1}{c}f(UXU^\ast)I$. Define $\tilde{T}$ by $\tilde{T}(X) = cU^\ast T(X)U$, so $\tilde{T}$ preserves parallel pairs and
$$\tilde{T}(X)=X+g(X)I,$$ where $g(X):=-f(UXU^\ast)$.

Suppose, by way of contradiction, that $g$ is not a scalar multiple of
the trace functional.  Then there exists a Hermitian  $X\in\Ker \Tr\setminus\Ker g$  with all its eigenvalues simple (since the set of trace-zero Hermitian matrices with all eigenvalues simple is dense in $\Ker\Tr\cap\bH_n$). We can assume that $X=X_1\oplus X_2$, $X_1=\diag(\lambda_1(X),\dots,\lambda_k(X))$, $X_2=\diag(\lambda_{k+1}(X),\dots,\lambda_n(X))$, and $\Tr(X_1)=-\Tr(X_2)=w_k(X)$. Since all eigenvalues of $X$ are simple, it attains its $k$-numerical radius at exactly two rank-$k$ projections, $P=E_{11}+\dots+E_{kk}$ and $I-P$. As such, ${\mathcal P}(X)=(-S(P))\cup S(P)\cup (-S(I-P))\cup S(I-P)$ contains a two-dimensional $\IR$-subspace, namely $\IR I_k\oplus \IR I_k=\IR(I_k\oplus 0_k) + \IR(0_k\oplus I_k)$.
   This is mapped into $\tilde{T}(X)=X+g(X)I$ which achieves its $k$-numerical radius at exactly one rank-$k$ projection (if
    $g(X)>0$ this is $P$; if $g(X)<0$ this would be $I-P$).  But then $(- S(P))\cup S(P)={\mathcal P}(\tilde{T}(X))\supseteq \tilde{T}({\mathcal P}(X))$ would contain a two-dimensional subspace, contradicting  Lemma~\ref{lem:S(P)-is-pointed-cone}.
Thus $g$ is a scalar multiple of the trace functional, and hence so is $f$.
\end{proof}

\begin{proof}[Proof of Theorem \ref{main:Hermitian}]
The implication (b) $\Rightarrow$ (a) is clear.
The implication (a) $\Rightarrow$ (c) follows from Lemma \ref{commute} if $n \ge 3$, so suppose (a) holds and $n = 2$.  We may assume that $T$ is unital by Lemma \ref{section3:lem2-new}.
Suppose a nonzero $X \in \bH_2$ has trace zero.  Then $X$ attains its numerical radius at exactly two rank-1 projections $P$ and $I-P$.  Thus $\mathcal{P}(X) = S(P) \cup (-S(P)) \cup (-S(I-P)) \cup S(I-P)$ contains a two-dimensional subspace $\IR P+\IR(I-P)$.
Since $T(\mathcal{P}(X)) \subseteq \mathcal{P}(T(X))$ and $T$ is bijective, $\mathcal{P}(T(X))$ contains a two-dimensional subspace.  It follows that $\Tr T(X) = 0$.  (Reason: If $\Tr T(X) \ne 0$, then $T(X)$ (which can't be a scalar multiple of $I$ due to the bijectivity of $T$) must attain its numerical radius at a unique projection $Q$, so $\mathcal{P}(T(X)) = S(Q) \cup -S(Q)$ contains a two-dimensional subspace, contradicting Lemma~\ref{lem:S(P)-is-pointed-cone}.)  It follows that $T$ preserves the set of trace-zero matrices; since $T$ is also unital, it preserves the trace and (c.3) holds.

It remains to prove (c) $\Rightarrow$ (b).
 If (c.1) holds, then clearly (b) holds.

Suppose (c.2) holds. We may replace $T$ by $A \mapsto \gamma UT(A^+)U^*$ for a suitable nonzero $\gamma$ and unitary $U$ (here $A^+$ denotes $A$ or $A^t$, depending on the case in (c.2)) and assume that $T$ is unital and $T(X) = cX$ for all $X \in \bH_n^0$.  Now suppose $(A_1, A_2)$ is a nontrivial TEA pair (that is, $A_1$, $A_2$ are nonzero).  If $A_1 + A_2$ attains its $k$-numerical radius at a rank-$k$ projection $P$ then
$$|\tr A_1P | + |\tr A_2P| \leq w_k(A_1) + w_k(A_2) = w_k(A_1+A_2) = |\tr (A_1+A_2)P| \leq |\tr A_1P| + |\tr A_2P|,$$
so $w_k(A_j) = |\tr A_jP|$ and
$\tr A_1P$, $\tr A_2P$ have the same sign. Moreover, $\Ran P$ is a reducing subspace for both $A_1$ and $A_2$. If needed, we may replace $A_j$ by $-A_j$ and assume $\tr A_j P > 0$.

We may write $A_j = a_j I  + Z_j$ where $\tr Z_j = 0$.  With respect to the decomposition $\IC^n = \Ran P \oplus \Ker P$, we may further write $Z_j = X_j \oplus Y_j$ with $\tr Y_j = - \tr X_j$.  Then $w_k(A_j) = a_j k + \tr X_j$ and $a_j, \tr X_j \geq 0$ (if they had opposite signs, $|\tr A_j(I-P)| = |a_j k - \tr X_1| > |a_j k + \tr X_1|  = \tr A_j P = w_k(A_j)$, a contradiction).

By Proposition \ref{prop:propertiesofW_k}(2) it follows that $\tr X_j$ is the sum of the $k$ largest eigenvalues of $Z_j$.  Thus $T(A_j) = a_jI + cZ_j$ attains its $k$-numerical radius at $P$ if $c > 0$ and at $I-P$ if $c<0$, so $T(A_1), T(A_2)$ form a TEA pair.  Thus (b) holds.

Finally, suppose $n=2$ and (c.3) holds.  We may scale $T$ and assume $T$ is unital.  Suppose $(A_1,A_2)$ is a nontrivial TEA pair.  As above when we assumed (c.2), we may write $A_j = a_j I + x_j \left(\begin{smallmatrix} 1 & 0 \\ 0 & -1 \end{smallmatrix}\right)$ with respect to a suitable basis. Write $X_{0} = \left(\begin{smallmatrix} 1 & 0 \\ 0 & -1 \end{smallmatrix}\right)$.  Since $T$ preserves trace, we may replace $T$ by $A \mapsto UAU^*$ for a suitable unitary and assume that $T(X_0) = cX_0$ for some nonzero $c$.  Then the argument for (c.2) applies and we conclude that $T(A_1), T(A_2)$ form a TEA pair, so (b) holds.
\end{proof}

\bigskip
\noindent
{\bf \Large Acknowledgment}\nopagebreak

This  research was supported in part by the Slovenian Research Agency (research program P1-0285, research project N1-0210, and bilateral project BI-US-22-24-129).
Li is an affiliate member of the Institute for Quantum Computing, University of Waterloo; his research was
also supported by the Simons Foundation Grant 851334. Singla is a PIMS Postdoctoral Fellow in the University of Regina; his research was also co-funded by PIMS and the Department of Mathematics and Statistics, University of Regina.

\bigskip
\noindent
(Kuzma)
Department of Mathematics, University of Primorska, Slovenia; Institute of Mathematics,
Physics,
and Mechanics, Slovenia. E-mail: bojan.kuzma@famnit.upr.si

\medskip\noindent
(Li) Department of Mathematics, College of William \& Mary, Williamsburg, VA 23187, USA.
E-mail: ckli@math.wm.edu

\medskip\noindent
(Poon)
Department of Mathematics, Embry-Riddle Aeronautical University, Prescott AZ 86301, USA.
E-mail: poon3de@erau.edu

\medskip\noindent
(Singla) 
The Department of Mathematics and Statistics, University of Regina, Canada. E-mail: ss774@snu.edu.in

\end{document}